\theoremstyle{plain}
\newtheorem{theorem}{Theorem}
\newtheorem{lemma}[theorem]{Lemma}
\newtheorem{proposition}[theorem]{Proposition}
\theoremstyle{definition}
\newtheorem{definition}[theorem]{Definition}
\theoremstyle{remark}
\title{Matroids that classify forests }
\author{
Lorenzo Traldi\\
\small Lafayette College\\[-0.8ex]
\small Easton, Pennsylvania, U.S.A. 18042\\
\small\tt traldil@lafayette.edu
}
\date{}
\begin{document}

\maketitle

\begin{abstract}
Elementary arguments show that a tree or forest is determined (up to isomorphism) by binary matroids defined using the adjacency matrix.

\bigskip\noindent \textbf{Keywords:} adjacency matrix, forest, local equivalence, matroid, pivot, tree
\end{abstract}

\section{Introduction}

Let $G$ be a simple graph with $|V(G)|=n$. The adjacency matrix of $G$ is an $n \times n$ matrix $A(G)$, whose entries lie in the two-element field $GF(2)$. We consider two matroids introduced in \cite{Tnewnew}, which are defined using $A(G)$.

\begin{definition}
The \emph{restricted isotropic matroid} of $G$ is the binary matroid represented by the $n \times 2n$ matrix
\[
IA(G)=\begin{pmatrix}
I & A(G)
\end{pmatrix} \text ,
\]
where $I$ is the identity matrix. The matroid is denoted $M[IA(G)]$.
\end{definition}

\begin{definition}
The \emph{isotropic matroid} of $G$ is the binary matroid represented by the $n \times 3n$ matrix 
\[
IAS(G)=\begin{pmatrix}
I & A(G) & I+A(G)
\end{pmatrix} \text ,
\]
where $I$ is the identity matrix. The matroid is denoted $M[IAS(G)]$. 
\end{definition}

As explained in \cite{BT, Tnewnew}, the names of these matroids reflect their connection with Bouchet's theories of delta-matroids and isotropic systems \cite{Bi1,bouchet1987,Bi2, B1}. It is not the purpose of the present paper to give an account of this connection. Rather, our purpose is to present a direct, elementary proof of the following.

\begin{theorem} 
\label{main}
Isotropic matroids and restricted isotropic matroids are classifying invariants for forests. That is, if $F$ and $F'$ are forests then any one of these three statements implies the other two:
\begin{enumerate}
    \item The forests $F$ and $F'$ are isomorphic.
    \item The restricted isotropic matroids $M[IA(F)]$ and $M[IA(F')]$ are isomorphic.
    \item The isotropic matroids $M[IAS(F)]$ and $M[IAS(F')]$ are isomorphic.
\end{enumerate}
\end{theorem}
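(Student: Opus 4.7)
My plan is to prove the easy implications $(1)\Rightarrow(2)$ and $(1)\Rightarrow(3)$ directly, and the two converses $(2)\Rightarrow(1)$ and $(3)\Rightarrow(1)$ by induction on $n=|V(F)|$; the implications $(2)\Rightarrow(3)$ and $(3)\Rightarrow(2)$ then follow by composition. For $(1)\Rightarrow(2)$ and $(1)\Rightarrow(3)$: a graph isomorphism $\sigma\colon V(F)\to V(F')$ with permutation matrix $P_\sigma$ satisfies $A(F')=P_\sigma A(F)P_\sigma^{\top}$, so the column bijection $x_v\mapsto x_{\sigma(v)}$, $y_v\mapsto y_{\sigma(v)}$, $z_v\mapsto z_{\sigma(v)}$ induces matroid isomorphisms after applying $P_\sigma$ to the rows.

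The central graph-theoretic observation for the converses is that if $\ell$ is a leaf of $F$ with unique neighbor $v$, then the $\ell$-th column of $A(F)$ equals the standard basis vector $e_v$, so $y_\ell=x_v$ as vectors and these elements are parallel in $M[IA(F)]$. I would prove that for each vertex $v$ the set $C_v=\{x_v\}\cup\{y_\ell:\ell\text{ is a leaf neighbor of }v\}$ is a parallel class, and that these are all of the non-trivial parallel classes of $M[IA(F)]$: in a forest, the equation $y_i=y_j$ (forcing $N(i)=N(j)$) or $y_i=x_j$ (forcing $N(i)=\{j\}$) would otherwise produce a cycle, so both identifications require a leaf-parent configuration. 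This canonical bijection between non-trivial parallel classes and vertices with leaf neighbors is matroid-invariant. For $M[IAS(F)]$, the analog is the family of triples $\{x_v,y_v,z_v\}$: since $x_v+y_v+z_v=0$, each such triple is a $3$-element circuit (modulo the degeneracy caused by isolated vertices) and partitions the ground set.

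For $(2)\Rightarrow(1)$, isolated vertices are handled first: each contributes a loop $y_i=0$ and a coloop $x_i$, and stripping one loop together with one coloop reduces both matroids synchronously. When $F$ has a leaf $\ell$ with parent $v$, the engine of the induction is the identity
\[
M[IA(F)]/x_\ell\setminus y_\ell\;\cong\;M[IA(F-\ell)],
\]
verified by direct calculation. Since a matroid isomorphism maps parallel classes bijectively, the class $C_v$ of $M[IA(F)]$ corresponds to some class $C_{v'}$ of $M[IA(F')]$, allowing the reduction to be performed on both sides and the inductive hypothesis to yield $F-\ell\cong F'-\ell'$. The size of $C_v$ records how many leaves are attached at the corresponding internal vertex, providing enough information to reassemble $F$ from the smaller forest. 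The implication $(3)\Rightarrow(1)$ is handled analogously, with the $3$-element triples of $M[IAS(F)]$ taking the place of the parallel pairs and a suitable minor operation playing the role of the contract-and-delete.

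The principal obstacle is the inductive step: within the parallel class $C_v=\{x_v,y_{\ell_1},\dots,y_{\ell_k}\}$, any two elements are interchangeable by a matroid automorphism, so one cannot single out a specific $y_\ell$ (as opposed to $x_v$) from the abstract matroid, nor can the matching singleton $x_\ell$ be canonically identified. The argument must therefore show that any admissible choice of element of $C_v$ to play the role of $y_\ell$, paired with an appropriate singleton for $x_\ell$, produces $M[IA(F^-)]$ for \emph{some} leaf-reduced forest $F^-$, and that the combined data of the smaller matroid's isomorphism class together with the parallel-class multiplicities of the original matroid suffice to pin down $F$ up to isomorphism; the analogous ambiguity must be resolved in the triple-based argument for $(3)\Rightarrow(1)$.
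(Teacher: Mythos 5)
Your outline of the easy implications, your classification of the nontrivial parallel classes of $M[IA(F)]$, and your reduction identities (contract $\phi(\ell)$, delete $\chi(\ell)$ to get $M[IA(F-\ell)]$; the analogous minor for $IAS$) all match the paper's machinery. But the difficulty you flag at the end --- that elements of a parallel class are interchangeable, so the isomorphism cannot be assumed to send $\chi(\ell)$ to a $\chi$ column and $\phi(v)$ to a $\phi$ column --- is not a detail to be deferred; it is the entire content of the hard direction, and your proposal leaves it unresolved. For $(2)\Rightarrow(1)$ the paper resolves it with a non-obvious automorphism (Lemma \ref{seraut}): when $v$ is a leaf with neighbor $w$, not only is the parallel swap $(\chi(v)\,\phi(w))$ an automorphism of $M[IA(F)]$, but so is $(\phi(v)\,\chi(w))$, even though those two columns are \emph{not} equal. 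Composing with these lets one normalize any isomorphism of components so that it sends $\chi(v)\mapsto\chi(v')$ and $\phi(w)\mapsto\phi(w')$ exactly, after which deletion of the single element $\chi(v)$ (within one component, sidestepping your problem of pairing $\phi(\ell)$ with $\chi(\ell)$ across the two components, which have no circuits linking them) runs the induction. Your version, which contracts $\phi(\ell)$ and deletes $\chi(\ell)$ in the full matroid, needs a coherence statement between the two components that you have no mechanism to supply.

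The gap is more serious for $(3)\Rightarrow(1)$. You write that the vertex triples $\{\phi(v),\chi(v),\psi(v)\}$ partition the ground set into $3$-circuits and treat this partition as if it were matroid-invariant, but it is not: an isomorphism $M[IAS(F)]\to M[IAS(F')]$ carries the vertex triangulation to \emph{some} partition into dependent triples, and there are other $3$-circuits around (e.g.\ $\{\phi(x),\phi(y),\chi(z)\}$ when $z$ has exactly the neighbors $x$ and $y$). The heart of the paper's proof is Theorems \ref{bigthm} and \ref{bigthm2}: every triangulation of $M[IAS(T)]$ for a tree $T$ is equivalent to the vertex triangulation under a matroid automorphism, where the proof requires a careful case analysis of which $3$-circuits can occur, and the tree $P_4$ is genuinely exceptional --- it admits a triangulation not reachable by parallel swaps, which must be handled by the ``strange'' automorphism of Section \ref{strange}. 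Nothing in your sketch addresses the existence of these rogue triangulations, and without that your induction for $(3)\Rightarrow(1)$ cannot start: you cannot transport the leaf/neighbor structure across the isomorphism until you know the vertex triangulation is preserved up to automorphism. A further normalization (the paper's leaf index, Lemma \ref{indexlem}, together with the automorphisms $\beta_{vw}$ and $\gamma_{vw}$) is then still needed to make the isomorphism respect $\phi$ versus $\chi$ versus $\psi$ within each triple before the minor reduction applies.
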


The implications $1 \implies 2$ and $1 \implies 3$ of Theorem \ref{main} are obvious. The converses may be deduced from more general results that have appeared in the literature, as follows. Oum \cite[Cor.\ 3.5]{Oum} showed that for two bipartite graphs, a condition equivalent to isomorphism of the restricted isotropic matroids implies that the graphs are related through pivoting, and hence also related through local complementation. Bouchet \cite{Btree} used the split decomposition of Cunningham \cite{Cu} to show that forests related through local complementation are isomorphic. The implication $2 \implies 1$ follows from these two results. The implication $3\implies 1$ follows from the same result of Bouchet \cite{Btree}, using the fact that graphs with isomorphic isotropic matroids are related through local complementation \cite{Tnewnew}. A different way to deduce $3 \implies 1$ from the results of \cite{Tnewnew} was given in \cite{BT}.

In the following pages we provide a self-contained exposition of Theorem \ref{main}, which requires only elementary ideas of linear algebra and matroid theory. Before proceeding, we should thank Robert Brijder for his long collaboration and friendship, which have provided many instances of inspiration and understanding.

\section{Binary matroids}
\label{secbin}

In this section we briefly recall some fundamental elements of the theory of binary matroids. We refer the reader to Oxley \cite{O} for a thorough discussion of matroid theory.

Let $B$ be a matrix with entries in $GF(2)$, the field of two elements. The \emph{binary matroid} $M[B]$ represented by $B$ has an element for each column of $B$. A nonempty subset $D \subseteq M[B]$ is \emph{dependent} if the corresponding columns of $B$ are linearly dependent; a minimal dependent set is a \emph{circuit} of $M[B]$. An element $x$ of $M[B]$ is a \emph{loop} if $\{x\}$ is dependent; that is, every entry of the corresponding column of $B$ is $0$. An element $x$ of $M[B]$ is a \emph{coloop} if no circuit includes $x$; that is, the corresponding column of $B$ is not included in the linear span of the other columns. Two elements $x$ and $y$ are \emph{parallel} if $\{x,y\}$ is a circuit; that is, the corresponding columns of $B$ are nonzero and identical. A \emph{basis} of $M[B]$ is a subset that corresponds to a basis of the column space of $B$.

If $B'$ is another matrix with entries in $GF(2)$, then a bijection between the matroids $M[B]$ and $M[B']$ is an \emph{isomorphism} iff it matches dependent sets to dependent sets. In general, there will be many different matrices representing matroids isomorphic to $M[B]$. Here is a simple way to produce some of them.

\begin{lemma}
\label{iso}Suppose a particular entry of $B$, say $b_{ij}$, is equal to $1$. Let $\kappa$ be any column vector of the same size as the columns of $B$, whose $i$th entry is $0$. Let $B'$ be the matrix obtained from $B$ by adding $\kappa$ to each column of $B$ that has a $1$ in the $i$th row. Then $M[B] \cong M[B']$.
\end{lemma}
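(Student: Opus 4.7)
The plan is to show something stronger than isomorphism: the identity bijection on column indices is itself a matroid isomorphism between $M[B]$ and $M[B']$. To do this I would show that a set of column indices is linearly dependent in $B$ if and only if it is linearly dependent in $B'$.

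First I would fix notation. Denote the columns of $B$ by $c_1,\ldots,c_m$ and write $\epsilon_l$ for the $i$-th entry of $c_l$, so that the corresponding column of $B'$ is $c'_l = c_l + \epsilon_l \kappa$. Since the $i$-th entry of $\kappa$ is $0$, adding $\kappa$ does not change the $i$-th entry of any column; in particular the $i$-th row of $B'$ coincides with the $i$-th row of $B$, and $\epsilon_l$ is also the $i$-th entry of $c'_l$.

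Now suppose $\sum_{l \in S} \alpha_l c_l = 0$ for some subset $S$ of column indices and coefficients $\alpha_l \in GF(2)$. Reading off the $i$-th coordinate of this equation gives $\sum_{l \in S} \alpha_l \epsilon_l = 0$, so
\[
\sum_{l \in S} \alpha_l c'_l \;=\; \sum_{l \in S} \alpha_l c_l \;+\; \Bigl(\sum_{l \in S} \alpha_l \epsilon_l\Bigr)\kappa \;=\; 0 + 0\cdot \kappa \;=\; 0.
\]
Conversely, if $\sum_{l \in S} \alpha_l c'_l = 0$, then reading the $i$-th coordinate again yields $\sum_{l \in S} \alpha_l \epsilon_l = 0$ (using that $\kappa_i = 0$), which lets me subtract the $\kappa$-term to conclude $\sum_{l \in S} \alpha_l c_l = 0$.

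Thus the two matrices have exactly the same linear dependence relations among any matched set of columns, and the identity bijection on column indices is a matroid isomorphism. The only substantive step is the observation that the $i$-th row of the two matrices agrees, forcing the scalar multiplying $\kappa$ in any putative dependence to vanish; I do not anticipate any real obstacle beyond this short computation, and the hypothesis $b_{ij}=1$ plays no role in the argument itself (it will matter for the intended applications of the lemma).
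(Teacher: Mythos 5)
Your proof is correct and is essentially the paper's own argument, just written out in full: the paper's proof is the single observation that a set of columns of $B$ sums to $0$ if and only if the corresponding set of columns of $B'$ sums to $0$, which is exactly the computation you carry out via the $i$-th coordinate. Your side remark that the hypothesis $b_{ij}=1$ is not needed for the argument is also accurate.
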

\begin{proof}
A set of columns of $B$ sums to $0$ if and only if the corresponding set of columns of $B'$ sums to $0$.
\end{proof}

If $S \subseteq M[B]$ then $S$ is made into a \emph{submatroid} of $M[B]$ by defining a subset of $S$ to be dependent in $S$ if and only if it is dependent in $M[B]$. The submatroid is also said to be obtained from $M[B]$ by \emph{deleting} the subset $M[B] \setminus S$. The submatroid is denoted $M[B] | S$ or $M[B] \setminus(M[B] \setminus S)$. If $B_S$ is the submatrix of $B$ consisting of columns corresponding to elements of $S$ then it is easy to see that $M[B_S] = M[B] | S$. 

Circuits are used to define an equivalence relation $\sim$ on $M[B]$, as follows: every element $x$ has $x \sim x$; if $x$ and $y$ are two elements of one circuit then $x \sim y$; and if $x \sim y \sim z$ then $x \sim z$. The equivalence classes of $\sim$ are the \emph{components} of $M[B]$. Components of cardinality $1$ are singletons $\{x\}$, where $x$ is a loop or coloop. As the circuits of a submatroid of $M[B]$ are also circuits of $M[B]$ itself, every component of a submatroid of $M[B]$ is contained in a component of $M[B]$. In general, the components are not the same; a component of $M[B]$ may contain several different components of a given submatroid. In particular, a coloop of a submatroid need not be a coloop of $M[B]$.

If $x \in M[B]$ then the \emph{contraction} $M[B]/x$ is a matroid on the set $M[B] \setminus \{x\}$. A subset $D \subseteq M[B] \setminus \{x\}$ is defined to be dependent in $M[B]/x$ if and only if $D \cup \{x\}$ is dependent in $M[B]$. It is not immediately evident, but it does turn out that $M[B]/x$ is represented by a matrix. To see why, notice first that we may presume that the column of $B$ representing $x$ has no more than one nonzero entry. If $x$ is a loop, then of course the entries in the $x$ column are all $0$. Otherwise, if $b_{ij}$ is a nonzero entry of the column representing $x$, and $\kappa$ is the column vector obtained from the $j$th column of $B$ by changing the $i$th entry to $0$, then according to Lemma \ref{iso} we may add $\kappa$ to every column of $B$ with a $1$ in the $i$th row, without changing the matroid $M[B]$. The effect on the $j$th column is to change every entry other than $b_{ij}$ to $0$.

If $x$ is a loop, then a matrix representing $M[B]/x$ is obtained by removing the $x$ column of $B$. If $x$ is not a loop, the following proposition applies.

\begin{proposition}
\label{contr}
Suppose $x$ corresponds to the $j$th column of $B$, and $b_{ij}$ is the only nonzero entry of this column. Let $B'$ be the submatrix of $B$ obtained by removing both the $i$th row and the $j$th column of $B$. Then $M[B'] \cong M[B]/x$.
\end{proposition}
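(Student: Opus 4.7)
My plan is to verify directly that the obvious bijection, sending each element $y \in M[B] \setminus \{x\}$ to the column of $B'$ obtained from the $B$-column of $y$ by deleting its $i$th entry, matches dependent subsets on each side.

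Concretely, since we are in $GF(2)$, a subset of columns is linearly dependent iff some nonempty sub-subset sums to the zero vector. The hypothesis says that the $j$th column of $B$ equals the standard basis vector $e_i$, and the columns of $B'$ are obtained from the remaining columns of $B$ by deleting the $i$th coordinate. Thus for any $D \subseteq M[B] \setminus \{x\}$ and any nonempty $S \subseteq D$, the sum of the $B'$-columns indexed by $S$ equals $0$ iff the sum of the $B$-columns indexed by $S$ lies in $\{0, e_i\}$. In the first case $S$ already witnesses dependence of $D$ in $M[B]$, hence of $D \cup \{x\}$; in the second case $S \cup \{x\}$ witnesses dependence of $D \cup \{x\}$. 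Either way, dependence of $D$ in $M[B']$ forces dependence of $D \cup \{x\}$ in $M[B]$, which by definition means $D$ is dependent in $M[B]/x$.

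For the reverse direction, assume $D$ is dependent in $M[B]/x$, so some nonempty $T \subseteq D \cup \{x\}$ has $B$-columns summing to $0$. If $x \notin T$, then deleting the $i$th coordinate shows the $B'$-columns of $T$ sum to $0$, witnessing dependence of $D$ in $M[B']$. If $x \in T$, set $S = T \setminus \{x\}$; the sum of the $B$-columns of $S$ then equals $e_i$, which is nonzero, so $S$ is nonempty, and deleting the $i$th coordinate again shows its $B'$-columns sum to $0$. This completes the equivalence and hence the isomorphism.

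I do not expect a genuine obstacle; the only point that needs care is the edge case $S = T \setminus \{x\} = \emptyset$, which is ruled out precisely because $e_i \ne 0$ (so $\{x\}$ alone cannot witness dependence). Everything else is bookkeeping in $GF(2)$.
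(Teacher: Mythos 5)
Your argument is correct and is essentially the paper's own proof, spelled out in more detail: both rest on the observation that a set of columns of $B'$ sums to $0$ exactly when the corresponding columns of $B$ sum to $0$ or to the $j$th column, so that adjoining $x$ always yields a dependent set of $M[B]$. Your explicit treatment of the edge case $T=\{x\}$ is a welcome touch of care that the paper leaves implicit.
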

\begin{proof}
If $S$ is a set of columns of $B'$ then the sum of $S$ is $0$ if and only if the sum of the corresponding columns of $B$ is equal to either $0$ or the $j$th column of $B$. Either way, we get a dependent set of columns of $B$ by including the $j$th column along with the columns corresponding to elements of $S$.
\end{proof}

Suppose $B$ is an $m \times n$ matrix with entries in $GF(2)$, of rank $r$. If $r=0$ or $r=n$ then the \emph{standard representation} of $M[B]$ is $M[0_n]$ or $M[I_n]$, where $0_n$ and $I_n$ are the $n \times n$ zero and identity matrices. If $r \in \{1, \dots, n-1 \}$, choose $r$ columns of $B$ that constitute a basis of the column space of $B$. Permuting columns, we may presume these $r$ columns are the first $r$ columns of $B$. Then the linear relations that hold among the columns of $B$ are precisely the same as the linear relations that hold among the columns of the matrix $\begin{pmatrix} I_r & A \end{pmatrix}$, where $I_r$ is the $r \times r$ identity matrix and $A$ is the matrix whose columns record the coefficients in formulas for the $(r+1)$st, $\dots, n$th columns of $B$ as linear combinations of the first $r$ columns. As the columns of $B$ and $\begin{pmatrix} I_r & A \end{pmatrix}$ satisfy the same linear relations, it must be that $M[B] \cong M[\begin{pmatrix} I_r & A \end{pmatrix}]$. A matrix of the form $\begin{pmatrix} I_r & A \end{pmatrix}$ is a \emph{standard representation} of the matroid $M[B]$.

The \emph{dual} of $M[B]$ is a matroid on the same ground set as $M[B]$, whose bases are the complements of bases of $M[B]$. The dual of $M[B]$ is denoted $M[B]^*$. The duals of binary matroids are themselves binary matroids:

\begin{proposition}
\label{duals}
Given an $r \times n$ standard representation $M[\begin{pmatrix} I_r & A \end{pmatrix}]$ of a binary matroid $M$ of rank $r \in \{1, \dots, n-1 \}$, the dual matroid $M^*$ is the binary matroid $M[\begin{pmatrix} A^T & I_{n-r} \end{pmatrix}]$, where $A^T$ is the transpose of $A$.
\end{proposition}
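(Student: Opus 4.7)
The plan is to verify that the bases of $M[(A^T \mid I_{n-r})]$ are precisely the complements of the bases of $M[(I_r \mid A)]$ under the natural identification of their ground sets, in which column $i$ on each side corresponds to the same matroid element. Since a matroid is determined by its bases and $M^*$ is defined via complementary bases, this will suffice.

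I would set $E_1 = \{1,\ldots,r\}$ and $E_2 = \{r+1,\ldots,n\}$, and for any $S \subseteq E = E_1 \cup E_2$ write $S_i = S \cap E_i$ and $\bar S_i = E_i \setminus S_i$. The first substantive step is a linear-algebra lemma: for $|S|=r$, the columns of $(I_r \mid A)$ indexed by $S$ are independent if and only if the submatrix of $A$ on rows $\bar S_1$ and columns $S_2$ is invertible. This follows from a direct block computation: the columns in $S_1$ are standard basis vectors, so after permuting rows the $r \times r$ submatrix becomes block upper triangular with $I_{|S_1|}$ and $A[\bar S_1, S_2]$ on the diagonal. The second block is automatically square, since $|\bar S_1| = r - |S_1| = |S_2|$.

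Applied symmetrically to $M[(A^T \mid I_{n-r})]$, whose rows are indexed by $E_2$ and whose identity block sits over $E_2$, the same lemma shows that a set $T \subseteq E$ with $|T| = n-r$ is a basis if and only if the submatrix of $A^T$ on rows $E_2 \setminus T_2$ and columns $T_1$ is invertible. Taking $T = \bar S$ gives $T_1 = \bar S_1$ and $E_2 \setminus T_2 = S_2$, so the relevant submatrix is $A^T[S_2, \bar S_1]$, which is the transpose of $A[\bar S_1, S_2]$. Its invertibility is equivalent to that of $A[\bar S_1, S_2]$, and hence, by the previous paragraph, to $S$ being a basis of $M[(I_r \mid A)]$.

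The argument uses only standard linear algebra, so no deep matroid machinery is needed beyond the definition of the dual. The main hazard I anticipate is purely bookkeeping: three complementation operations are in play (within $E_1$, within $E_2$, and globally in $E$), and the transpose swaps the roles of rows and columns when moving between the two block matrices, so some care is required to check that the indices match up on the nose.
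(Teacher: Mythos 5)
Your proof is correct and follows essentially the same route as the paper's: both reduce the claim to the observation that, for a basis candidate $S$, the relevant $r\times r$ submatrix of $\begin{pmatrix} I_r & A \end{pmatrix}$ is block triangular with an identity block and a square submatrix $A[\bar S_1,S_2]$ on the diagonal, whose transpose is exactly the block governing the complementary column set in $\begin{pmatrix} A^T & I_{n-r} \end{pmatrix}$. The only difference is cosmetic: your uniform indexing absorbs the boundary cases ($S$ contained in $\{1,\dots,r\}$ or in $\{r+1,\dots,n\}$) that the paper treats separately.
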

\begin{proof}
The proposition is equivalent to the claim that for any subset $S \subseteq\{1, \dots, n\}$ of size $r$, the columns of $\begin{pmatrix} I_r & A \end{pmatrix}$ with indices from $S$ are linearly independent if and only if the columns of $\begin{pmatrix} A^T & I_{n-r} \end{pmatrix}$ with indices not from $S$ are linearly independent. 

Suppose $S$ intersects both $\{1, \dots, r\}$ and $\{r+1, \dots, n\}$. Then after permuting the rows and columns of $\begin{pmatrix} I_r & A \end{pmatrix}$, we may assume that $S = \{s+1, \dots, s+r \}$ where $1 \leq s \leq r-1$. Let the submatrix of $\begin{pmatrix} I_r & A \end{pmatrix}$ with column indices from $S$ be
\[
\begin{pmatrix} 0 & A_S \\ I_{r-s} & A'  \end{pmatrix}.
\]
The columns of this matrix are independent if and only if the $s \times s$ submatrix $A_S$ is nonsingular. If $s+r=n$ then the columns of $\begin{pmatrix} A^T & I_{n-r} \end{pmatrix}$ with indices not from $S$ constitute the submatrix $A_S^T$, which is nonsingular if and only if $A_S$ is nonsingular. 

If $s+r<n$, let
\[
\begin{pmatrix} I_r & A \end{pmatrix} = \begin{pmatrix} I_s & 0 & A_S & A'' \\ 0 & I_{r-s} & A' & A''' \end{pmatrix}.
\]
Then
\[
\begin{pmatrix} A^T & I_{n-r} \end{pmatrix} = \begin{pmatrix} A^T_S & (A')^T & I_s & 0\\ (A'')^T & (A''')^T & 0 & I_{n-r-s}  \end{pmatrix} \text{,}
\]
so the columns of $\begin{pmatrix} A^T & I_{n-r} \end{pmatrix}$ with indices not from $S$ constitute the submatrix
\[
\begin{pmatrix}  A^T_S & 0 \\ (A'')^T & I_{n-r-s} \end{pmatrix} \text{,}
\]
which is nonsingular if and only if $A_S$ is nonsingular.

If $S$ does not intersect $\{1, \dots, r\}$ then a similar argument applies, without $A'$ and $A''$. If $S$ does not intersect $\{r+1, \dots, n\}$ then the columns of $\begin{pmatrix} I_r & A \end{pmatrix}$ with indices from $S$ are the columns of $I_r$, and the columns $\begin{pmatrix} A^T & I_{n-r} \end{pmatrix}$ with indices not from $S$ are the columns of $I_{n-r}$. \end{proof}

We emphasize that a matroid and its dual are defined on the same ground set. When two matroids on different ground sets are said to be duals of each other, the relationship between them must involve a specified bijection between their ground sets.

We should also mention that the special cases $r=0$ and $r=n$ are excluded from Proposition \ref{duals}; this is done merely for consistency of notation. When $r=0$, the matroid $M=M[0_n]$ has the dual $M[I_n]$. And when $r=n$, $M=M[I_n]$ has the dual $M[0_n]$.

\section{Isotropic matroids of forests}
\label{secmat}

If $F$ is a forest then we use the following notation for the columns of the matrices $IA(F)$ and $IAS(F)$, and for the corresponding elements of the matroids $M[IA(F)]$ and $M[IAS(F)]$. If $v \in V(F)$ the $v$ column of $A(F)$ is denoted $\chi(v)$, or $\chi_F(v)$ if it is necessary to specify $F$. The column of the identity matrix $I$ with a $1$ in the row of $IA(F)$ that contains the $v$ row of $A(F)$ is denoted $\phi(v)$, or $\phi_F(v)$. The column of $I+A(F)$ corresponding to $v$ is denoted $\psi(v)$, or $\psi_F(v)$.

Some simple observations are given in Proposition \ref{props} below. Recall that a leaf in a forest is a vertex with precisely one neighbor. For clarity, we refer to the connected components of a forest $F$, and the components of the matroids $M[IA(F)]$ and $M[IAS(F)]$. 

\begin{proposition}
\label{props}
If $F$ is a forest, $M[IA(F)]$ and $M[IAS(F)]$ have the following properties.
\begin{enumerate}
    \item $M[IA(F)]$ is a submatroid of $M[IAS(F)]$.
    \item If $v$ is isolated, then $\chi(v)$ is a loop. There are no other loops in $M[IAS(F)]$.
    \item If $v$ is isolated, then $\phi(v)$ is a coloop in $M[IA(F)]$. Also $\phi(v)$ and $\psi(v)$ are parallel in $M[IAS(F)]$, and $\{\phi(v),\psi(v)\}$ is a component of $M[IAS(F)]$.
    \item If $v$ is a leaf adjacent to $w$, then $\chi(v)$ and $\phi(w)$ are parallel.
    \item If $v_1$ and $v_2$ are two leaves adjacent to $w$, then $\chi(v_1)$ and $\chi(v_2)$ are parallel.
    \item If a connected component of $F$ has precisely two vertices $v$ and $w$, then $\psi(v)$ and $\psi(w)$ are parallel in $M[IAS(F)]$.
    \item All parallels in $M[IAS(F)]$ fall under one of the above descriptions. 
\end{enumerate}
\end{proposition}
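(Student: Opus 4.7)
The plan is to verify statements (1)--(6) by direct inspection of the column formulas, and to handle (7) by an exhaustive case analysis on the types of the two parallel columns. Throughout I use $\phi(v)_r=[r=v]$, $\chi(v)_r=[r\sim v]$ (with $\chi(v)_v=0$ since $A(F)$ has zero diagonal), and $\psi(v)_r=[r=v]+[r\sim v]$, all in $GF(2)$.

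Statement (1) is immediate, since $IA(F)$ is a column submatrix of $IAS(F)$. Statement (2) is the observation that $\phi(v)$ is always a standard basis vector and $\psi(v)$ has a $1$ in row $v$, while $\chi(v)=0$ iff $v$ is isolated. For (3), if $v$ is isolated then $\psi(v)=\phi(v)$, and row $v$ of $IAS(F)$ has its only $1$'s in columns $\phi(v)$ and $\psi(v)$; hence the only circuit through $\phi(v)$ is $\{\phi(v),\psi(v)\}$, giving both the coloop claim in $M[IA(F)]$ and the component claim in $M[IAS(F)]$. Statement (4) is the identity $\chi(v)=\phi(w)$ when $v$ is a leaf of $w$; (5) is immediate from (4); and (6) follows because a two-vertex component $\{v,w\}$ has $\chi(v)=\phi(w)$ and $\chi(w)=\phi(v)$, so $\psi(v)=\phi(v)+\phi(w)=\psi(w)$.

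The substantive part is (7), and organizing this case analysis is the main obstacle. I would split into the six unordered pair-of-types. Two $\phi$-columns are never equal. A pair $\chi(u)=\chi(v)$ forces $N(u)=N(v)$; the zero diagonal of $A(F)$ then gives $u\not\sim v$, and in a forest two nonadjacent vertices cannot share more than one neighbor (else a $4$-cycle), so $u,v$ are both leaves of a common vertex, matching (5). A pair $\psi(u)=\psi(v)$ rearranges to $N(u)\triangle N(v)=\{u,v\}$; row $u$ gives $u\sim v$, and further common neighbors are ruled out (else a triangle), so $\{u,v\}$ is a two-vertex component, matching (6). A pair $\phi(u)=\chi(v)$ forces $N(v)=\{u\}$, so $v$ is a leaf of $u$, matching (4). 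A pair $\phi(u)=\psi(v)$ rearranges to $\chi(v)=\phi(u)+\phi(v)$; row $v$ forces $u=v$, and then $\chi(v)=0$, so $v$ is isolated, matching (3). Finally $\chi(u)=\psi(v)$ rearranges to $\chi(u)+\chi(v)=\phi(v)$, but rows $u$ and $v$ give the incompatible conditions $[u\sim v]=[u=v]$ and $[u\sim v]=1$, so this pair never arises. The subtlest points are the last two subcases involving $\psi$, whose impossibility or forced collapse $u=v$ is entirely controlled by the zero diagonal of $A(F)$, together with the $\chi$--$\chi$ and $\psi$--$\psi$ cases, where the forest hypothesis is essential to exclude the short cycles that would otherwise produce parallels outside the list (3)--(6).
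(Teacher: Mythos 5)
Your proof is correct and takes essentially the same approach as the paper: items (1)--(6) by direct inspection of the columns, and item (7) by an exhaustive case analysis showing that equality of two columns forces one of the listed configurations, with the forest hypothesis ruling out the short cycles. The only difference is organizational --- the paper splits first on the number of nonzero entries and then on column types, while you split directly on the six type-pairs using coordinate formulas --- but the underlying argument is the same.
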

\begin{proof}
Only item 7 is not obvious. Suppose $x$ and $y$ are parallel in $M[IAS(F)]$. Then the $x$ and $y$ columns of $IAS(F)$ are nonzero and identical. 

If the two columns have more than one nonzero entry, they are not $\phi$ columns. If they are both $\chi$ columns, the two corresponding vertices of $F$ share two neighbors; this is impossible in a forest. If one is a $\chi$ column and the other is a $\psi$ column then the corresponding vertices of $F$ are neighbors, with a shared neighbor; again, this is impossible. If both columns are $\psi$ columns and they have more than two nonzero entries, then again, the two corresponding vertices are neighbors with a shared neighbor, an impossibility. If both columns are $\psi$ columns and they have only two nonzero entries, then they fall under item 6.

If the two columns have only one nonzero entry, then they cannot both be $\phi$ columns, and it cannot be that one is a $\chi$ column and the other is a $\psi$ column. All the other $\phi,\chi,\psi$ combinations are covered in items 3 -- 6.
\end{proof}

Here are two obvious lemmas.

\begin{lemma}
\label{trips}
If $v \in V(F)$ then $\{\phi(v),\chi(v),\psi(v)\}$ is a dependent set in $M[IAS(F)]$. We call it the \emph{vertex triple} of $v$. It is a circuit of $M[IAS(F)]$ if and only if $v$ is not an isolated vertex.
\end{lemma}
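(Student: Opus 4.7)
The plan is to read the claim directly from the definition of the block matrix $IAS(F)=\begin{pmatrix} I & A(F) & I+A(F) \end{pmatrix}$. By construction, the $v$-column of the third block is the sum (over $GF(2)$) of the $v$-column of $I$ and the $v$-column of $A(F)$, so
\[
\phi(v)+\chi(v)+\psi(v)=0.
\]
This single relation already gives the first assertion: $\{\phi(v),\chi(v),\psi(v)\}$ is a dependent set in $M[IAS(F)]$.

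To prove the ``circuit'' assertion I would characterize when a proper subset of the triple is dependent. A proper subset has either one or two elements, so I need to describe loops and parallels among the three columns. The column $\phi(v)$ always has a $1$ in the row indexed by $v$, so $\phi(v)\ne 0$; similarly $\psi(v)$ has a $1$ in row $v$, so $\psi(v)\ne 0$. The column $\chi(v)$ is the $v$-column of $A(F)$, which vanishes precisely when $v$ is isolated. Hence if $v$ is isolated, $\{\chi(v)\}$ is a proper dependent subset of the triple, so the triple is not a circuit.

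It remains to handle the case in which $v$ is not isolated. Then none of the three singletons is dependent, so I must only rule out parallels. The $v$-row of $\phi(v)$ is $1$, while the $v$-row of $\chi(v)$ is $0$ (there are no loops in $F$); so $\phi(v)\neq \chi(v)$. The $v$-row of $\psi(v)$ is $1$ and $\psi(v)$ has an extra $1$ in every row corresponding to a neighbour of $v$, so $\psi(v)\neq \phi(v)$ (since $v$ has at least one neighbour) and $\psi(v)\neq \chi(v)$ (they differ in row $v$). Thus no two-element subset is dependent either, and the triple is a minimal dependent set, i.e.\ a circuit.

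There is no genuine obstacle here: once the identity $\psi(v)=\phi(v)+\chi(v)$ is noted, the circuit characterization reduces to a short case check that is essentially already recorded in Proposition \ref{props}.
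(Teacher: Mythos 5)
Your proof is correct: the relation $\phi(v)+\chi(v)+\psi(v)=0$ gives dependence, and your case check that no proper subset is dependent (exactly when $v$ is not isolated) is the intended verification. The paper states this lemma without proof, calling it obvious, and your argument is precisely the direct reading-off from the block structure of $IAS(F)$ that the author had in mind.
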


\begin{lemma}
\label{ncircs}
If $v \in V(F)$ then $\{\chi(v)\} \cup \{\phi(w) \mid vw \in E(F) \}$ is a circuit in $M[IA(F)]$. We call it the \emph{neighborhood circuit} of $v$.
\end{lemma}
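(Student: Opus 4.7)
The statement is essentially a direct reading of the matrix $IA(F) = (I \mid A(F))$, so I expect a short, explicit argument rather than any subtle idea.

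My plan is to work entirely with the column vectors. First I will recall that $\phi(w)$ is the standard basis vector supported on the row indexed by $w$, and that the column $\chi(v)$ of $A(F)$ has a $1$ in row $w$ precisely when $vw \in E(F)$. From this the identity
\[
\chi(v) = \sum_{w : vw \in E(F)} \phi(w)
\]
is immediate, which shows that $\{\chi(v)\} \cup \{\phi(w) \mid vw \in E(F)\}$ sums to $0$ in $GF(2)$, hence is dependent in $M[IA(F)]$.

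To upgrade dependence to being a circuit, I will check minimality by examining the two kinds of proper subsets. If $\chi(v)$ is removed, we are left with a collection of distinct standard basis vectors, which is clearly linearly independent. If instead some $\phi(w_0)$ is removed (with $w_0$ a neighbor of $v$), then any $GF(2)$-linear dependence among the remaining vectors would have to involve $\chi(v)$, because the surviving $\phi$'s are still distinct standard basis vectors. But $\chi(v)$ has a $1$ in row $w_0$, while none of the remaining $\phi(w)$ with $w \neq w_0$ has a nonzero entry in row $w_0$, so the sum of any subset containing $\chi(v)$ is nonzero in that coordinate. Thus no proper subset is dependent.

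There is no real obstacle here; the only thing to be careful about is the minimality check, which must consider both flavors of element (the one $\chi$ column and the several $\phi$ columns). The fact that we are working over $GF(2)$ and that $\phi(w)$ is literally the standard basis vector $e_w$ makes the support argument in the second case very clean, and no use of the forest hypothesis is needed for this particular lemma.
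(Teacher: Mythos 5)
Your proof is correct and is exactly the direct column-vector verification the paper has in mind; the paper simply labels this lemma as ``obvious'' and gives no proof at all. Your explicit minimality check (and the observation that the forest hypothesis is not needed) is a sound and complete filling-in of that omitted argument.
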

Neighborhood circuits and vertex triples determine the components of $M[IA(F)]$ and $M[IAS(F)]$.
\begin{proposition}
\label{comps}
For each connected component $C$ of $F$, the matroid $M[IA(F)]$ has two components. For each $v \in V(F)$, each of the matroid components contains precisely one of $\phi(v),\chi(v)$. If the two matroid components are thought of as matroids on the set $V(C)$, they are duals of each other. On the other hand, $M[IAS(F)]$ has two components for each isolated vertex of $F$, and one component for each connected component of $F$ with more than one vertex.
\end{proposition}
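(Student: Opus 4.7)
The plan is to reduce to a single connected component $C$ of $F$ at a time. Under a suitable vertex ordering $A(F)$ is block-diagonal, so every $\phi$-, $\chi$-, or $\psi$-column indexed by $V(C)$ is supported only in rows of $V(C)$; elements from distinct connected components of $F$ therefore cannot appear in a common circuit of either matroid. If $C$ is an isolated vertex $v$, items~2 and~3 of Proposition~\ref{props} handle everything: in $M[IA(F)]$ we have the singleton loop-component $\{\chi(v)\}$ and the singleton coloop-component $\{\phi(v)\}$, which on $V(C)=\{v\}$ are the matroids $M[0_1]$ and $M[I_1]$, mutual duals; and in $M[IAS(F)]$ we have the loop-component $\{\chi(v)\}$ and the parallel-pair component $\{\phi(v),\psi(v)\}$.

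For a component $C$ with at least two vertices, I use the tree bipartition $V(C)=V_0\sqcup V_1$. Since the $\chi$-column of $v\in V_i$ is supported in rows $N(v)\subseteq V_{1-i}$ while the $\phi$-column of $v\in V_i$ is supported in row $v\in V_i$, after reordering rows the submatrix of $IA(F)$ indexed by $V(C)$ splits block-diagonally: $\phi(V_0)\cup\chi(V_1)$ yields the matrix $\begin{pmatrix} I_{|V_0|} & B \end{pmatrix}$ on rows $V_0$, and $\phi(V_1)\cup\chi(V_0)$ yields $\begin{pmatrix} I_{|V_1|} & B^T \end{pmatrix}$ on rows $V_1$, where $B$ is the biadjacency matrix from $V_0$ to $V_1$. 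Consequently these two sets lie in separate components of $M[IA(F)]$, and each contains precisely one of $\phi(v),\chi(v)$ for every $v\in V(C)$. To see that each set is itself a single matroid component, I would chain neighborhood circuits (Lemma~\ref{ncircs}): for $v\in V_1$ the circuit $\{\chi(v)\}\cup\{\phi(w):w\in N(v)\}$ lies within the first block and links $\chi(v)$ to every $\phi(w)$, $w\in N(v)$; a tree path between any two vertices of $C$ translates into a chain of such circuits joining any prescribed pair of elements. Duality on $V(C)$ then follows from Proposition~\ref{duals}: the dual of $M[\begin{pmatrix} I_{|V_0|} & B \end{pmatrix}]$ is represented by $\begin{pmatrix} B^T & I_{|V_1|} \end{pmatrix}$ on the same ordered ground set, which after the natural identifications of both blocks with $V(C)$ coincides with the matrix representing the other block.

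Finally, in $M[IAS(F)]$ for a non-trivial component $C$, Lemma~\ref{trips} provides the vertex-triple circuit $\{\phi(v),\chi(v),\psi(v)\}$ for each $v\in V(C)$. Since $\phi(v)$ and $\chi(v)$ lie in different components of $M[IA(F)]$, this circuit merges the two blocks and absorbs every $\psi(v)$, producing a single component of $M[IAS(F)]$. I expect the main obstacle to be the careful bookkeeping in the duality claim: one must track the column orderings of $\begin{pmatrix} I_{|V_0|} & B \end{pmatrix}$ versus $\begin{pmatrix} I_{|V_1|} & B^T \end{pmatrix}$ simultaneously with the swapped roles of $\phi$ and $\chi$ on vertices of $V_0$ versus $V_1$.
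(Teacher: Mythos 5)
Your proposal is correct and follows essentially the same route as the paper: both split a nontrivial connected component via the bipartition (the paper phrases it as parity of distance), connect each part with chained neighborhood circuits, obtain duality from Proposition~\ref{duals} applied to $\begin{pmatrix} I & B \end{pmatrix}$ and $\begin{pmatrix} B^T & I \end{pmatrix}$, and merge the two parts in $M[IAS(F)]$ using vertex triples. Your explicit block-diagonality remark and the attention to column-ordering bookkeeping are slightly more careful than the paper's write-up, but the argument is the same.
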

\begin{proof}
Let $v \in V(F)$. If $v$ is isolated then items 2 and 3 of Proposition \ref{props} tell us that the vertex triple of $v$ is the union of two components of $M[IAS(F)]$, and $\{\phi(v),\chi(v)\}$ is the union of two components of $M[IA(F)]$. These two components are duals because $\phi(v)$ is a coloop of $M[IA(F)]$, and $\chi(v)$ is a loop.

Suppose $v$ is not isolated. The vertex triple of $v$ and the neighborhood circuit of $v$ are both circuits of $M[IAS(F)]$, so one component of $M[IAS(F)]$ contains the vertex triple of $v$ and also contains $\phi(w)$ for every neighbor $w$ of $v$. Iterating this observation, we conclude that one component of $M[IAS(F)]$ contains the vertex triple of $w$ for every vertex $w$ in the connected component of $F$ that contains $v$. 

In contrast, vertex triples are not circuits of $M[IA(F)]$. Using the neighborhood circuits, we see that there are two components of $M[IA(F)]$ associated with $v$. One component includes 
\[
\{ \phi(v) \} \cup \{ \chi(w) \mid d(v,w) \text{ is finite and odd} \} \cup \{ \phi(w) \mid d(v,w) \text{ is finite and even} \} 
\]
and the other component includes
\[
\{ \chi(v) \} \cup \{ \phi(w) \mid d(v,w) \text{ is finite and odd} \} \cup \{ \chi(w) \mid d(v,w) \text{ is finite and even} \} .
\]
Here $d(v,w)$ is the distance from $v$ to $w$ in $F$, i.e. the number of edges in a path from $v$ to $w$, if such a path exists, and $\infty$ otherwise. The two matroid components are naturally represented by matrices of the form $\begin{pmatrix} I_r & A_1 \end{pmatrix}$ and $\begin{pmatrix} A_1^T & I_{|V(C)|-r} \end{pmatrix}$, where
\[
\begin{pmatrix} 0 & A_1 \\ A_1^T & 0 \end{pmatrix}
\]
is the adjacency matrix of the connected component $C$ containing $v$. Proposition \ref{duals} tells us that the two matroid components are duals of each other, when they are thought of as matroids on $\{1, \dots , |V(C)|\}$ according to the ordering of the columns in $\begin{pmatrix} I_r & A_1 \end{pmatrix}$ and $\begin{pmatrix} A_1^T & I_{|V(C)|-r} \end{pmatrix}$. \end{proof}

\section{Examples}
\label{secexam}
In this section we mention some counterexamples to assertions related to Theorem \ref{main}.
\subsection{Graphs that are not forests}
Let $C_3$ be the cycle on three vertices, and $P_3$ the path on three vertices. Then
\[
IAS(C_3)=
\begin{pmatrix}
1 & 0 & 0 & 0 & 1 & 1 & 1 & 1 & 1\\
0 & 1 & 0 & 1 & 0 & 1 & 1 & 1 & 1\\
0 & 0 & 1 & 1 & 1 & 0 & 1 & 1 & 1 
\end{pmatrix}.
\]
Let $\kappa$ be the column vector 
\[
\kappa=
\begin{pmatrix}
1\\
0\\
1 
\end{pmatrix}.
\] 
If we add $\kappa$ to each column of $IAS(C_3)$ with a $1$ in the second row, we obtain the matrix 
\[
B =\begin{pmatrix}
1 & 1 & 0 & 1 & 1 & 0 & 0 & 0 & 0\\
0 & 1 & 0 & 1 & 0 & 1 & 1 & 1 & 1\\
0 & 1 & 1 & 0 & 1 & 1 & 0 & 0 & 0 
\end{pmatrix}.
\]
According to Lemma \ref{iso}, $M[IAS(C_3)] \cong M[IA(B)]$. On the other hand, the columns of $B$ are the same as the columns of
\[
IAS(P_3)=
\begin{pmatrix}
1 & 0 & 0 & 0 & 1 & 0 & 1 & 1 & 0\\
0 & 1 & 0 & 1 & 0 & 1 & 1 & 1 & 1\\
0 & 0 & 1 & 0 & 1 & 0 & 0 & 1 & 1 
\end{pmatrix} \text{,}
\]
in a different order. It follows that $M[IAS(C_3)] \cong M[B] \cong M[IAS(P_3)]$. We see that the implication $3 \implies 1$ of Theorem \ref{main} does not hold for graphs that are not forests.

The restricted isotropic matroid $M[IA(C_3)]$ has only one component, so it illustrates the fact that Proposition \ref{comps} does not hold for graphs that are not forests. 

Now, let $C_4$ and $P_4$ be the cycle and path on four vertices. Then
\[
IA(P_4)=
\begin{pmatrix}
1 & 0 & 0 & 0 & 0 & 1 & 0 & 0\\
0 & 1 & 0 & 0 & 1 & 0 & 1 & 0\\
0 & 0 & 1 & 0 & 0 & 1 & 0 & 1\\
0 & 0 & 0 & 1 & 0 & 0 & 1 & 0
\end{pmatrix}.
\]
Let $\kappa_1$ be the first column of $IA(P_4)$, and $\kappa_4$ the fourth column. Let $B$ be the matrix obtained from $IA(P_4)$ by adding $\kappa_1$ to every column with a $1$ in the third row, and $\kappa_4$ to every column with a $1$ in the second row. Then
\[
B=
\begin{pmatrix}
1 & 0 & 1 & 0 & 0 & 0 & 0 & 1\\
0 & 1 & 0 & 0 & 1 & 0 & 1 & 0\\
0 & 0 & 1 & 0 & 0 & 1 & 0 & 1\\
0 & 1 & 0 & 1 & 1 & 0 & 0 & 0
\end{pmatrix}
\]
has the same columns as
\[
IA(C_4)=
\begin{pmatrix}
1 & 0 & 0 & 0 & 0 & 1 & 0 & 1\\
0 & 1 & 0 & 0 & 1 & 0 & 1 & 0\\
0 & 0 & 1 & 0 & 0 & 1 & 0 & 1\\
0 & 0 & 0 & 1 & 1 & 0 & 1 & 0
\end{pmatrix} \text{,}
\]
so Lemma \ref{iso} tells us that $M[IA(P_4)] \cong M[B] \cong M[IA(C_4)]$. We see that the implication $2 \implies 1$ of Theorem \ref{main} does not hold for graphs that are not forests.
\subsection{Strange isomorphisms of isotropic matroids}
\label{strange}

Theorem \ref{main} asserts that if there is an isomorphism between the (restricted) isotropic matroids of two forests, then there is also an isomorphism between the forests themselves. The theorem does not assert that a matroid isomorphism gives rise directly to an isomorphism between the forests. Generally speaking, in fact, a matroid isomorphism need not be directly related to any particular graph isomorphism.

One way to appreciate this fact is to observe that the matrix $IA(P_3)$ has three identical columns, and three other columns that constitute a circuit of size three in $M[IA(P_3)]$. In each case, the three corresponding matroid elements are indistinguishable, up to automorphism. It follows that the automorphism group of $M[IA(P_3)]$ is isomorphic to $S_3 \times S_3$, where $S_3$ is the symmetric group. In contrast, the graph $P_3$ has only two automorphisms. 

Another relevant example is $P_4$. Here is $IAS(P_4)$:
\[
\begin{pmatrix}
1 & 0 & 0 & 0 & 0 & 1 & 0 & 0 & 1 & 1 & 0 & 0 \\
0 & 1 & 0 & 0 & 1 & 0 & 1 & 0 & 1 & 1 & 1 & 0 \\
0 & 0 & 1 & 0 & 0 & 1 & 0 & 1 & 0 & 1 & 1 & 1 \\
0 & 0 & 0 & 1 & 0 & 0 & 1 & 0 & 0 & 0 & 1 & 1 
\end{pmatrix}.
\]
Produce a matrix $B$ from $IAS(P_4)$ in three steps: interchange the first and fourth rows, add the second column to each column with a nonzero entry in the fourth row, and add the third column to each column with a nonzero entry in the first row. That is,
\[
B = \begin{pmatrix}
0 & 0 & 0 & 1 & 0 & 0 & 1 & 0 & 0 & 0 & 1 & 1 \\
1 & 1 & 0 & 0 & 1 & 1 & 1 & 0 & 0 & 0 & 1 & 0 \\
0 & 0 & 1 & 1 & 0 & 1 & 1 & 1 & 0 & 1 & 0 & 0 \\
1 & 0 & 0 & 0 & 0 & 1 & 0 & 0 & 1 & 1 & 0 & 0 
\end{pmatrix}.
\]
Permuting the rows of a matrix has no effect on the matroid represented by the matrix, and the other two steps in the production of $B$ fall under Lemma \ref{iso}, so $B$ represents the matroid $M[IAS(P_4)]$, with the columns of $B$ representing matroid elements in the same order as the columns of $IAS(P_4)$. The matroid elements are listed, in order, in the first row of the matrix displayed below. (Here $V(P_4)=\{v_1,v_2,v_3,v_4\}$, with the vertices indexed in order along the path.) On the other hand, the columns of $B$ are equal to columns of $IAS(P_4)$, as listed in the second row of the matrix displayed below.
\[
\begin{pmatrix}
\phi(v_1) & \phi(v_2) & \phi(v_3) & \phi(v_4) & \chi(v_1) & \chi(v_2) & \chi(v_3) & \chi(v_4) & \psi(v_1) & \psi(v_2) & \psi(v_3) & \psi(v_4) \\
\chi(v_3) & \chi(v_1) & \chi(v_4) & \chi(v_2) & \phi(v_2) & \psi(v_3) & \psi(v_2) & \phi(v_3) & \phi(v_4) & \psi(v_4) & \psi(v_1) & \phi(v_1)
\end{pmatrix}
\]

It follows that the $2 \times 12$ matrix displayed above represents an automorphism $f$ of $M[IAS(P_4)]$. Theorem \ref{main} is satisfied, of course, as $P_4$ is isomorphic to itself. But neither automorphism of $P_4$ appears to be associated in a direct way with $f$.

In the proof of Theorem \ref{main}, the lack of a direct relationship between arbitrary graph and matroid isomorphisms is managed by refining the statement of Theorem \ref{main} to require some agreement between the two types of isomorphisms. See Theorems \ref{treethm} and \ref{closethm} below.

\section{Part 1 of the proof}
\label{twoproof}
In this section we prove the implication $2 \implies 1$ of Theorem \ref{main}.
\begin{lemma}
\label{seraut}
Suppose $v$ is a leaf of a forest $F$, and $w$ is its only neighbor. Then both of the transpositions  $(\phi(v)\chi(w))$ and $(\chi(v)\phi(w))$, in cycle notation, are automorphisms of $M[IA(F)]$.
\end{lemma}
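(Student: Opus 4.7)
The plan is to dispatch the two transpositions by quite different arguments: one is essentially trivial, while the other requires a single application of Lemma \ref{iso} with a carefully chosen $\kappa$.

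For the transposition $(\chi(v)\phi(w))$, I would simply invoke item 4 of Proposition \ref{props}: the columns of $IA(F)$ labeled $\chi(v)$ and $\phi(w)$ are parallel, which (because $IA(F)$ contains the identity block) means they are literally the same column, namely the standard basis vector $e_w$. Permuting two identical columns of a matrix leaves the matrix unchanged, so the transposition preserves every linear dependence among columns and is automatically an automorphism of $M[IA(F)]$.

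For the transposition $(\phi(v)\chi(w))$, the idea is to realize this swap as the effect of a single application of Lemma \ref{iso}. Let $i$ be the row of $IA(F)$ corresponding to $v$, and set
\[
\kappa = \chi(w) + \phi(v) = \sum_{u \sim w,\, u \neq v} e_u .
\]
Then $\kappa$ has a $0$ in row $i$, because $v$ is not a neighbor of itself. Crucially, because $v$ is a leaf whose only neighbor is $w$, the columns of $IA(F)$ having a $1$ in row $i$ are exactly two: $\phi(v)$ (the only column of $I$ with a $1$ in row $v$) and $\chi(w)$ (since $\chi(u)$ has a $1$ in row $v$ iff $u$ is a neighbor of $v$, iff $u=w$).

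Applying Lemma \ref{iso} with this $i$ and $\kappa$, one computes that $\phi(v)$ is replaced by $\phi(v)+\kappa = \chi(w)$ and $\chi(w)$ is replaced by $\chi(w)+\kappa = \phi(v)$, while every other column is unchanged. The resulting matrix therefore differs from $IA(F)$ exactly by interchanging its $\phi(v)$-column and its $\chi(w)$-column, so Lemma \ref{iso} tells us that the permutation of columns implementing this interchange is an isomorphism between $M[IA(F)]$ and itself, i.e.\ an automorphism, which is the transposition $(\phi(v)\chi(w))$ in cycle notation. There is no serious obstacle here; the only thing one needs to notice is the leaf hypothesis, which is precisely what makes the set of columns with a $1$ in row $v$ consist of just the two columns we wish to swap.
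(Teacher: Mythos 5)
Your proposal is correct and follows essentially the same route as the paper: the swap $(\chi(v)\phi(w))$ is trivial because the columns coincide, and $(\phi(v)\chi(w))$ is realized by applying Lemma \ref{iso} in the $v$ row with $\kappa=\phi(v)+\chi(w)$, exactly the paper's choice. The observation that the leaf hypothesis guarantees these are the only two columns with a $1$ in the $v$ row is precisely the key point the paper uses as well.
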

\begin{proof}
The transposition $(\chi(v)\phi(w))$ is an automorphism of $M[IA(F)]$ because the $\chi(v)$ and $\phi(w)$ columns of $IA(F)$ are equal.

The $\phi(v)$ and $\chi(w)$ columns of $IA(F)$ are the only columns with nonzero entries in the $v$ row. Let $\kappa$ be the sum of these two columns. Then $\kappa$ is a column vector with a $0$ in the $v$ row, so according to Lemma \ref{iso}, adding $\kappa$ to the $\phi(v)$ and $\chi(w)$ columns produces another matrix representing the same matroid. Adding $\kappa$ to these columns has the same effect as interchanging them, so the transposition $(\phi(v)\chi(w))$ is an automorphism of $M[IA(F)]$.
\end{proof}

\begin{lemma}
\label{con}
Let $v$ be a leaf of a forest $F$, and $w$ its unique neighbor. If $M$ is the component of $M[IA(F)]$ that contains $\chi(v)$, then $M \setminus \chi(v)$ is isomorphic to the component of $M[IA(F \setminus v)]$ that contains $\phi(w)$. The isomorphism is natural, in that the image of each element $\phi(x)$ or $\chi(x)$ from $M \setminus \chi(v)$ is the element $\phi(x)$ or $\chi(x)$ from $M[IA(F \setminus v)]$.
\end{lemma}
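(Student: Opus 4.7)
The plan is to verify directly that the identity on the labels $\phi(x),\chi(x)$ defines an isomorphism between $M \setminus \chi(v)$ and the component $M'$ of $M[IA(F\setminus v)]$ containing $\phi(w)$. I aim to exhibit one and the same submatrix of columns as a representation of both matroids, after discarding a row of zeros.

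First I would pin down the two ground sets using Proposition \ref{comps}. Writing $d$ for distance in $F$ and $d'$ for distance in $F\setminus v$, the component $M$ is
\[
\{\chi(v)\} \cup \{\phi(x) : d(v,x)\text{ is finite and odd}\} \cup \{\chi(x) : x\ne v,\ d(v,x)\text{ is finite and even}\},
\]
and $M'$ is
\[
\{\phi(w)\} \cup \{\chi(x) : d'(w,x)\text{ is finite and odd}\} \cup \{\phi(x) : x\ne w,\ d'(w,x)\text{ is finite and even}\}.
\]
Since $v$ is a leaf with unique neighbor $w$, every $x\ne v$ in the connected component of $v$ in $F$ satisfies $d(v,x)=1+d'(w,x)$, so the parities swap. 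A short bookkeeping then shows that, after removing $\chi(v)$, the ground set of $M\setminus\chi(v)$ agrees with the ground set of $M'$ under the identity on labels.

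Next I would show that the representing matrices coincide. The $v$ row of $IA(F)$ has $1$s only in the columns $\phi(v)$ and $\chi(w)$; since $d(v,v)=0$ and $d(v,w)=1$, both of these elements lie in the \emph{other} component of $M[IA(F)]$ associated with $v$, hence neither belongs to $M\setminus\chi(v)$. So on the columns of $M\setminus\chi(v)$ the $v$ row is identically zero, and I may delete it without affecting the matroid. After that deletion, each surviving column of $IA(F)$ coincides with the corresponding column of $IA(F\setminus v)$: the $\phi(x)$ columns are standard basis vectors in either matrix, and for every $\chi(x)$ in the common ground set one has $x\ne w$ (since $\chi(w)$ has been excluded), so $x$ is not adjacent to $v$ in $F$, meaning the $v$ row entry of $\chi(x)$ was already $0$ and the set of neighbors of $x$ is unchanged in passing from $F$ to $F\setminus v$.

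The main obstacle is the parity bookkeeping in the first step; once the ground sets are correctly matched, the equality of the representing submatrices is immediate and the naturality of the bijection is built into the construction.
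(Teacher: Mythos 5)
Your proposal is correct and follows essentially the same route as the paper: identify the ground set of $M$ via Proposition \ref{comps}, observe that the $v$ row of the representing submatrix is zero (because $\phi(v)$ and $\chi(w)$ lie in the other component), delete that row, and recognize the result as the submatrix of $IA(F\setminus v)$ representing the component containing $\phi(w)$. The paper states this more tersely, but your parity bookkeeping and the check that the surviving $\chi(x)$ columns are unaffected by removing $v$ are exactly the details it leaves implicit.
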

\begin{proof}
The matroid $M$ is represented by the submatrix of $IA(F)$ consisting of the $\phi(x)$ columns such that the distance between $w$ and $x$ is finite and even, and the $\chi(x)$ columns such that the distance between $w$ and $x$ is finite and  odd. Every entry of the $v$ row of this submatrix is $0$, so the $v$ row may be deleted without affecting the matroid represented by the matrix. If we then remove the $\chi(v)$ column, we obtain the submatrix of $IA(F \setminus v)$ that represents the component of $M[IA(F \setminus v)]$ containing $\phi(w)$. 
\end{proof}

\begin{theorem}
\label{treethm}
Suppose $T$ and $T'$ are trees, and there is a matroid isomorphism $f$ between a component of $M[IA(T)]$ and a component of $M[IA(T')]$. Then there is a graph isomorphism $g:T \to T'$, with $g(v)=v'$ whenever $f(\phi(v))=\phi(v')$ or $f(\chi(v))=\chi(v')$.
\end{theorem}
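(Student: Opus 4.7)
I plan to argue by induction on $n=|V(T)|$. The base case $n=1$ is immediate: both $T$ and $T'$ are single isolated vertices, the two components of $M[IA(T)]$ (and of $M[IA(T')]$) are a coloop $\{\phi(v)\}$ and a loop $\{\chi(v)\}$, and since $f$ preserves loops and coloops, sending $v$ to the unique vertex of $T'$ yields the required $g$.

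For the inductive step ($n\ge 2$) I would first ensure that $M$ contains $\chi(v_0)$ for some leaf $v_0$ of $T$. If it does not, the other component $M^*$ of $M[IA(T)]$ does; Proposition \ref{comps} makes $M^*$ the dual of $M$ on the vertex set, so $f$ induces an iso $f^*:M^*\to(M')^*$ realizing the same vertex bijection, and since $\iota^*$ flips the type $\phi/\chi$, $f^*$ preserves types exactly where $f$ does---so the constraint on $g$ is unchanged, and I may replace $f$ by $f^*$. Let $w_0$ be the neighbor of $v_0$. By item 4 of Proposition \ref{props}, $\chi(v_0)$ and $\phi(w_0)$ are parallel in $M$, and their parallel class has the form $P=\{\phi(w_0)\}\cup\{\chi(u):u\text{ a leaf of }T,\,u\sim w_0\}$; since $f$ carries parallel classes to parallel classes, $P':=f(P)$ is a parallel class of $M'$ of the analogous form, $\{\phi(w_0')\}\cup\{\chi(u'):u'\text{ a leaf of }T',\,u'\sim w_0'\}$. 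Choose a leaf $v_0'\sim w_0'$ in accordance with the theorem's constraint: if $f(\chi(v_0))=\chi(v_0')$ for some leaf $v_0'$, take that leaf; otherwise $f(\chi(v_0))=\phi(w_0')$ and the constraint on $g(v_0)$ is vacuous, so take any leaf $v_0'\sim w_0'$. Set $g(v_0):=v_0'$, delete $v_0$ from $T$ and $v_0'$ from $T'$, and invoke Lemma \ref{con}: $M\setminus\chi(v_0)$ and $M'\setminus\chi(v_0')$ are naturally isomorphic to components $M_1$ and $M_1'$ of $M[IA(T\setminus v_0)]$ and $M[IA(T'\setminus v_0')]$. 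By post-composing $f\vert_{M\setminus\chi(v_0)}$ with at most two parallel-swap automorphisms of $M'$---one, if needed, to interchange $\phi(w_0')$ and $\chi(v_0')$ (to land in $M'\setminus\chi(v_0')$), and another, if needed, to interchange $\phi(w_0')$ with whichever $\chi$-element is currently the image of $\phi(w_0)$ (so that $\phi(w_0)\mapsto\phi(w_0')$ inside $M_1'$)---I obtain an iso $h:M_1\to M_1'$ with $h(\phi(w_0))=\phi(w_0')$. Induction applied to $h$ yields a graph iso $g_1:T\setminus v_0\to T'\setminus v_0'$ with $g_1(w_0)=w_0'$; extending by $g=g_1$ on $V(T)\setminus\{v_0\}$ gives a bijection whose only new edge $v_0w_0$ maps to the edge $v_0'w_0'$ of $T'$, so $g$ is a graph iso.

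The main obstacle is to verify that $g$ honors the constraint from the \emph{original} $f$ rather than from the adjusted $h$. The parallel swaps used in constructing $h$ only permute elements within $P'$ (and its trace in $M_1'$), so for any vertex $u$ with $\iota(u)\notin P$ the constraints from $f\vert_{M\setminus\chi(v_0)}$ and from $h$ coincide. For the few vertices inside $P$'s support---the neighbor $w_0$ and the leaves adjacent to $w_0$ whose $\chi$ lies in $M$---a short case analysis shows that each swap converts a type-mismatched pair (on which $f$ imposes no constraint) into a type-matched pair (on which $h$, and hence $g_1$, is pinned to a definite value), and this value is always consistent with leaves of $T$ adjacent to $w_0$ being mapped to leaves of $T'$ adjacent to $w_0'$; so no original $f$-constraint is violated.
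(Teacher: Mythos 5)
Your overall strategy---induct on $|V(T)|$, pass to the component containing $\chi(v_0)$ for a leaf $v_0$, normalize $f$ on the parallel class of $\chi(v_0)$ by parallel swaps, and peel off the leaf via Lemma \ref{con}---is essentially the paper's argument (the paper performs the normalization with the transpositions of Lemma \ref{seraut}), and your explicit duality reduction to guarantee $\chi(v_0)\in M$ is a welcome clarification of a point the paper treats very briefly. However, there is a genuine gap in the case $f(\chi(v_0))=\phi(w_0')$: you may \emph{not} ``take any leaf $v_0'\sim w_0'$.'' In that case $f$ maps $P\setminus\{\chi(v_0)\}=\{\phi(w_0)\}\cup\{\chi(u):u\neq v_0,\ u\sim w_0,\ u\ \text{a leaf}\}$ bijectively onto $\{\chi(u'):u'\sim w_0',\ u'\ \text{a leaf}\}$, so every leaf $u'\sim w_0'$ except the single one with $\chi(u')=f(\phi(w_0))$ is already the target of a type-matched constraint $f(\chi(u))=\chi(u')$, which forces $g(u)=u'$. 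Picking such a $u'$ as $v_0'$ already violates the theorem's requirement (since $g$ is injective and $u\neq v_0$), and your first swap $(\phi(w_0')\,\chi(v_0'))$ then converts the type-\emph{matched} pair $(\chi(u),\chi(v_0'))$ into a mismatched one---contradicting your closing claim that each swap only converts mismatched pairs into matched ones; the second swap then re-pins $g(u)$ to the wrong vertex. A concrete failure: $T=T'=K_{1,3}$ with center $c$ and leaves $l_1,l_2,l_3$. The component $\{\phi(c),\chi(l_1),\chi(l_2),\chi(l_3)\}$ consists of four mutually parallel elements, so the $4$-cycle $f=(\chi(l_1)\,\phi(c)\,\chi(l_2)\,\chi(l_3))$ is an automorphism; it forces $g(l_2)=l_3$ and $g(l_3)=l_1$, yet with $v_0=l_1$ your recipe permits $v_0'=l_3$.

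The repair is exactly the choice the paper makes implicitly by normalizing the \emph{pair} $\{f(\chi(v)),f(\phi(w))\}$: when $f(\chi(v_0))=\phi(w_0')$, define $v_0'$ to be the unique leaf with $\chi(v_0')=f(\phi(w_0))$. Then the single swap $(\phi(w_0')\,\chi(v_0'))$ simultaneously sends the image of $\chi(v_0)$ to $\chi(v_0')$ and the image of $\phi(w_0)$ to $\phi(w_0')$, it disturbs only these two type-mismatched pairs, and no second swap is needed. Likewise, in the case $f(\chi(v_0))=\chi(v_0')$ the one remaining swap $(\phi(w_0')\,\chi(u'))$ with $\chi(u')=f(\phi(w_0))$ touches only mismatched pairs. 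With that one correction your consistency argument in the final paragraph becomes true as stated, and the proof goes through.
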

\begin{proof}
Suppose a component of $M[IA(T)]$ is isomorphic to a component of $M[IA(T')]$. According to Proposition \ref{comps}, it follows that $T$ and $T'$ have the same number $n$ of vertices. If $n =1$, the theorem follows immediately.

We proceed using induction on $n>1$. Suppose a component of $M[IA(T)]$ is isomorphic to a component of $M[IA(T')]$. According to Proposition \ref{comps}, each of the matroids $M[IA(T)],M[IA(T')]$ has two components, and the components are duals of each other. It follows that each component of $M[IA(T)]$ is isomorphic to a component of $M[IA(T')]$.

Let $v$ be a leaf of $T$, and $w$ its unique neighbor. Let $M$ be the component of $M[IA(T)]$ that contains the parallel pair $\{\chi(v),\phi(w)\}$. Let $f:M \to M'$ be an isomorphism between $M$ and a component $M'$ of $M[IA(T')]$. Of course it follows that $\{f(\chi(v)),f(\phi(w))\}$ is a parallel pair in $M'$.

According to Proposition \ref{props}, the parallel pair $\{f(\chi(v)),f(\phi(w))\}$ includes $\chi(v')$ for some leaf $v'$ of $T'$. The other element of $\{f(\chi(v)),f(\phi(w))\}$ might be $\chi(v'')$ for some leaf $v'' \neq v'$ which shares the unique neighbor $w'$ of $v'$. If it is, Lemma \ref{seraut} tells us that the transposition $(\chi(v'')\phi(w'))$ defines an automorphism of $M[IA(T')]$, and hence of $M'$. Composing this automorphism with $f$, if necessary, we may assume that $\{f(\chi(v)),f(\phi(w))\} = \{\chi(v'),\phi(w')\}$. Lemma \ref{seraut} also tells us that $(\chi(v')\phi(w'))$ defines an automorphism of $M'$; composing this automorphism with $f$, if necessary, we may assume that $f(\chi(v))=\chi(v')$ and $f(\phi(w))=\phi(w')$. 

Then $f$ defines an isomorphism $g:M \setminus \chi(v) \to M' \setminus \chi(v')$ by restriction. The image of $\phi(w)$ under $g$ is $f(\phi(w))=\phi(w')$. According to Lemma \ref{con}, $g$ defines a matroid isomorphism between a component of $M[IA(T \setminus v)]$ and a component of $M[IA(T' \setminus v')]$. The inductive hypothesis tells us that there is a graph isomorphism $h:T \setminus v \to T' \setminus v'$, with $ h(w)=w'$. Attaching $v$ to $w$ and $v'$ to $w'$, we see that $h$ extends to an isomorphism between $T$ and $T'$.
\end{proof}

Now, suppose $F$ and $F'$ are forests such that $M[IA(F)] \cong M[IA(F')]$. Then the two matroids have the same cardinality, so the two forests have the same number $n$ of vertices. If $n=1$, then of course the two forests are isomorphic.

The argument proceeds by induction on $n>1$. According to Proposition \ref{comps}, each $M[IA]$ matroid has twice as many components as the corresponding forest has connected components. If each matroid has only two components, the two forests are trees and Theorem \ref{treethm} applies. If each matroid has more than two components, choose a component $M$ in $M[IA(F)]$. Under an isomorphism $M[IA(F)] \cong M[IA(F')]$, $M$ corresponds to an isomorphic component of $M[IA(F')]$. Proposition \ref{comps} and Theorem \ref{treethm} tell us that the corresponding connected components of $F$ and $F'$ are isomorphic trees. After removing these isomorphic trees from $F$ and $F'$, and deleting a component isomorphic to $M$ and a component isomorphic to $M^*$ from each of $M[IA(F)]$ and $M[IA(F')]$, we may apply the inductive hypothesis to the remaining submatroids.

\section{Part 2 of the proof}
\label{threeproof}

The proof of the implication $3 \implies 1$ of Theorem \ref{main} is a bit longer than the proof of $2 \implies 1$. 

\begin{lemma}
\label{pendlem}
Let $v$ be a leaf in a forest $F$, let $w$ be the unique neighbor of $v$, and let $\beta_{vw}:M[IAS(F)] \to M[IAS(F)]$ be the permutation 
\[
\beta_{vw} = (\phi(v) \psi(w))(\chi(v) \phi(w))(\psi(v) \chi(w)) \text{,}
\]
in cycle notation. Then $\beta_{vw}$ is an automorphism of the matroid $M[IAS(F)]$.
\end{lemma}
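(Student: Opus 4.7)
The plan is to exhibit $\beta_{vw}$ as the rearrangement of columns induced by a single application of Lemma \ref{iso} to $IAS(F)$, with the pivot row being row $v$. First I would record two consequences of $v$ being a leaf with unique neighbor $w$: the columns of $IAS(F)$ having a $1$ in row $v$ are exactly $\phi(v), \chi(w), \psi(v), \psi(w)$, and the columns $\chi(v)$ and $\phi(w)$ are identical (both equal to the standard basis vector $e_w$), so the formal swap $\chi(v) \leftrightarrow \phi(w)$ prescribed by $\beta_{vw}$ will require no work.

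The key step is to choose the vector $\kappa$ cleverly. I would take $\kappa$ to have a $1$ in the row indexed by $w$ and in the rows indexed by the other neighbors of $w$, and $0$'s elsewhere; equivalently, $\kappa = \psi(w) + \phi(v) = \chi(w) + \psi(v)$ in $GF(2)$. Because $w$ and the other neighbors of $w$ are all distinct from $v$, the $v$-entry of $\kappa$ is $0$, so Lemma \ref{iso} applies (using, for example, the entry $b_{v,\phi(v)} = 1$ as the pivot). A short direct computation then shows that adding $\kappa$ to each of the four affected columns $\phi(v), \chi(w), \psi(v), \psi(w)$ replaces them respectively by $\psi(w), \psi(v), \chi(w), \phi(v)$; all other columns of $IAS(F)$ are untouched.

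The resulting matrix $B'$ therefore has, at each column position $x$, the column of $IAS(F)$ that originally occupied position $\beta_{vw}(x)$: the swaps $\phi(v) \leftrightarrow \psi(w)$ and $\psi(v) \leftrightarrow \chi(w)$ come from the four replacements above, and the swap $\chi(v) \leftrightarrow \phi(w)$ is invisible because those two columns are equal. Since Lemma \ref{iso} guarantees $M[B'] = M[IAS(F)]$, this shows that $\beta_{vw}$ is an automorphism. I do not expect a real obstacle; the only substantive content lies in guessing the right $\kappa$ and checking the four one-line vector identities that show the three prescribed transpositions all occur simultaneously.
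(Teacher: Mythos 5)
Your proposal is correct and is essentially the paper's own proof: the same vector $\kappa$ (supported on $w$ and the neighbors of $w$ other than $v$), the same application of Lemma \ref{iso} to the four columns with a $1$ in row $v$, and the same observation that the swap $\chi(v)\leftrightarrow\phi(w)$ costs nothing because those columns are equal. The only difference is that you write out the four one-line verifications that the paper leaves implicit.
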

\begin{proof}
Let $\kappa$ be the column vector with a $1$ in the $x$ row whenever $x$ is either $w$ or a neighbor of $w$ other than $v$.

The $v$ entry of $\kappa$ is $0$, so according to Lemma \ref{iso}, a new matrix representing $M[IAS(F)]$ can be obtained by adding $\kappa$ to every column of $IAS(F)$ whose entry in the $v$ row is $1$. The only columns of $IAS(F)$ with nonzero entries in the $v$ row are $\phi(v),\psi(v),\chi(w)$ and $\psi(w)$. For each of the four, the effect of adding $\kappa$ is the same as the effect of applying the permutation $\beta_{vw}$. 

The lemma follows, because the columns of $IAS(F)$ corresponding to $\chi(v)$ and $\phi(w)$ are equal. \end{proof}

Here is a similar observation.

\begin{lemma}
\label{pendlem2}
Let $v$ be a leaf in a forest $F$, and let $w$ be the unique neighbor of $v$. Then the permutation $\gamma_{vw}=(\phi(v) \psi(v))(\chi(w) \psi(w))$ is an automorphism of  $M[IAS(F)]$.
\end{lemma}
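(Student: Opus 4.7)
The plan is to mimic the proof of Lemma \ref{pendlem} by choosing an appropriate vector $\kappa$ and invoking Lemma \ref{iso} to realize $\gamma_{vw}$ as the change of basis obtained by adding $\kappa$ to the four affected columns. The natural row to work with is the row indexed by $v$, because $v$ is a leaf and hence the only columns of $IAS(F)$ with a nonzero entry in that row are exactly the four columns $\phi(v),\psi(v),\chi(w),\psi(w)$ appearing in $\gamma_{vw}$. Indeed, $\phi(v)$ and $\psi(v)$ have a $1$ in row $v$ by construction, while among the $\chi$ and $\psi$ columns of the remaining vertices, the only ones with a $1$ in row $v$ come from neighbors of $v$, of which $w$ is the unique example.

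Next I would choose $\kappa$ to be the column vector that is $1$ in row $w$ and $0$ elsewhere. Its entry in row $v$ is $0$ since $v\ne w$, so Lemma \ref{iso} applies (using any of the four aforementioned columns to supply the needed entry $b_{ij}=1$), and adding $\kappa$ to every column with a $1$ in row $v$ produces a matrix representing the same matroid.

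Now I would check that this column operation realizes $\gamma_{vw}$. Over $GF(2)$ we have the identities $\psi(v)=\phi(v)+\chi(v)$ and $\psi(w)=\phi(w)+\chi(w)$. Since $v$ is a leaf with only neighbor $w$, $\chi(v)$ is nothing other than $\kappa$; and $\phi(w)$ equals $\kappa$ too, since both are the column with a lone $1$ in row $w$. Consequently $\phi(v)+\kappa=\psi(v)$ and $\psi(v)+\kappa=\phi(v)$, while $\chi(w)+\kappa=\psi(w)$ and $\psi(w)+\kappa=\chi(w)$. Thus adding $\kappa$ to the four columns with a $1$ in row $v$ swaps $\phi(v)\leftrightarrow\psi(v)$ and $\chi(w)\leftrightarrow\psi(w)$, which is precisely the effect of $\gamma_{vw}$, completing the argument.

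The main (mild) obstacle is simply spotting that the right $\kappa$ is the indicator of row $w$, which works because that same vector coincides simultaneously with $\chi(v)$ and with $\phi(w)$; once this coincidence is noticed, everything else is a routine application of Lemma \ref{iso}, exactly parallel to the proof of Lemma \ref{pendlem}.
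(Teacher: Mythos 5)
Your proof is correct and is essentially the paper's own argument: the paper also takes $\kappa=\phi(w)$ and applies Lemma \ref{iso} to the columns with nonzero entries in the $v$ row. You have simply written out the verification (that exactly $\phi(v),\psi(v),\chi(w),\psi(w)$ are affected and that adding $\kappa$ swaps them in pairs) that the paper leaves implicit.
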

\begin{proof} Let $\kappa=\phi(w)$, and apply Lemma \ref{iso} to all the columns of $IAS(F)$ with nonzero entries in the $v$ row. \end{proof}

\begin{definition} \label{tridef} A \emph{triangulation} of a matroid is a partition of the matroid into $3$-element subsets, each of which is either a circuit or a union of two circuits. Two triangulations are \emph{equivalent} if one is the image of the other under an automorphism of the matroid.\end{definition}

The $3$-element subsets included in a triangulation $\mathfrak{T}$ are the \emph{triples} of $\mathfrak{T}$. The \emph{vertex triangulation} of the isotropic matroid of a graph is the triangulation whose triples are vertex triples. In general, an isotropic matroid will have many different triangulations. For instance, if $x$ and $y$ are parallel in $M[IAS(F)]$, then the transposition $(xy)$ is an automorphism of $M[IAS(F)]$, and the image under $(xy)$ of a triangulation $\mathfrak{T}$ may well be different from $\mathfrak{T}$. We call this kind of automorphism a \emph{parallel swap}.

\begin{definition} \label{swapdef} Two triangulations of a matroid are \emph{equivalent through parallel swaps}, or \emph{ps-equivalent}, if one can be obtained from the other through a sequence of parallel swaps. \end{definition}

\begin{lemma}
\label{degtwolem}
Let $T$ be a tree with four or more vertices, and let $\tau$ be a $3$-element circuit of $M[IAS(T)]$, which cannot be changed into a vertex triple using parallel swaps. Then $T$ has a degree-$2$ vertex $z$, with neighbors $x$ and $y$, such that parallel swaps change $\tau$ into either $\{\phi(x),\phi(y),\chi(z)\}$ or $\{\psi(x),\phi(y),\psi(z)\}$. In the latter case, $x$ is a leaf of $T$.
\end{lemma}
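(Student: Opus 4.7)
My plan is to classify every 3-element circuit $\tau$ of $M[IAS(T)]$ according to the pattern of types ($\phi$, $\chi$, $\psi$) present among its three elements. A 3-element subset is a circuit iff the three corresponding columns of $IAS(T)$ are pairwise distinct, nonzero, and sum to $0$ over $GF(2)$. Because $T$ is a tree with $|V(T)| \geq 4$, Proposition \ref{props} implies that the only parallel classes in $M[IAS(T)]$ arise from leaves: $\chi(v)$ is parallel to $\phi(w)$ when $v$ is a leaf with neighbor $w$, and $\chi(v_1)$ is parallel to $\chi(v_2)$ when $v_1, v_2$ are leaves sharing a neighbor. In particular no $\psi$-element has a parallel, so each $\psi$ appearing in $\tau$ is pinned down. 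I would translate the vanishing condition into a set-theoretic identity on neighborhoods and exploit tree-acyclicity to pin down the structure in each pattern.

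Two patterns yield no circuit at all: $(\phi,\phi,\phi)$ (because $e_u + e_v + e_w \ne 0$) and $(\psi,\psi,\psi)$ (because vanishing forces each of $u, v, w$ to have odd degree in $T[\{u,v,w\}]$, contradicting handshaking). Three patterns reduce to vertex triples via at most one parallel swap. $(\phi,\phi,\psi)$ forces the $\psi$-vertex $v$ to be a leaf adjacent to a $\phi$-vertex $w$, and $\chi(v) \leftrightarrow \phi(w)$ gives the vertex triple of $v$. $(\phi,\chi,\psi) = \{\phi(u),\chi(v),\psi(w)\}$ forces $u = w$ and $N(v) = N(w)$, giving the vertex triple of $w$ directly when $v = w$ or after a $\chi$-twin swap when $v, w$ are twin leaves. $(\chi,\chi,\psi) = \{\chi(u),\chi(v),\psi(w)\}$ forces, say, $u = w$ with $v$ a leaf adjacent to $w$, so $\chi(v) \leftrightarrow \phi(w)$ gives the vertex triple of $w$.

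The remaining five patterns are those covered by the lemma. $(\phi,\phi,\chi) = \{\phi(u),\phi(v),\chi(w)\}$ forces $N(w) = \{u,v\}$, already the first form with $z = w$. $(\phi,\chi,\chi)$ forces $N(v) \triangle N(w) = \{u\}$; tree-acyclicity pins down, say, $w$ as a leaf sharing a neighbor $y$ with a degree-$2$ vertex $v$ of neighbors $\{u, y\}$, so $\chi(w) \leftrightarrow \phi(y)$ yields the first form. $(\chi,\chi,\chi)$ demands mutually non-adjacent $u, v, w$ whose neighborhoods are covered pairwise by common neighbors; after ruling out a ``hexagonal'' cycle via acyclicity, two of the three must be leaves attached to the two neighbors of the third (degree-$2$) vertex, so two parallel swaps yield the first form. $(\phi,\psi,\psi) = \{\phi(u),\psi(v),\psi(w)\}$ forces $v, w$ adjacent with, say, $w$ a leaf and $v$ of degree $2$ with neighbors $\{u, w\}$, already the second form with $z = v$, $x = w$, $y = u$. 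Finally $(\chi,\psi,\psi)$ admits either a $P_3$ configuration (ruled out by $|V(T)| \geq 4$) or a configuration with the $\chi$-vertex a leaf adjacent to the non-leaf neighbor of a degree-$2$ $\psi$-vertex; in the latter case one $\chi \leftrightarrow \phi$ swap reduces to $(\phi,\psi,\psi)$. The main obstacle is the joint parity-and-acyclicity analysis in the $(\chi,\chi,\chi)$ and $(\chi,\psi,\psi)$ patterns, where one must use tree-acyclicity to rule out multiple common neighbors between any two of $u, v, w$ (which would create cycles) and invoke $|V(T)| \geq 4$ to exclude the degenerate $P_3$-sized configuration.
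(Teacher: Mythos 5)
Your plan is correct---I checked that each of the ten type patterns leads where you say it does---but it organizes the case analysis differently from the paper. You classify $3$-circuits by the multiset of symbols $\phi,\chi,\psi$ appearing in $\tau$ and extract the neighborhood identity forced by the column-sum condition in each of the ten patterns separately. The paper instead first shows the three elements come from three distinct vertices, then splits on the \emph{support structure} of the three columns: either every two of them share a nonzero row (which it kills outright using acyclicity and $|V(T)|\geq 4$), or one pair is support-disjoint, in which case the third column's support is the disjoint union of the other two and each of the two ``small'' columns has at most two nonzero entries (three or more would force two common neighbors with $z$). That single observation funnels everything into just two cases---a two-entry small column must be $\psi(x)$ with $x$ a leaf, giving the second form, while two one-entry small columns reduce by parallel swaps to $\phi$ columns, giving the first form---so the paper reaches the two target triples without ever treating patterns such as $(\chi,\chi,\chi)$ or $(\chi,\psi,\psi)$ explicitly. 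Your route is more mechanical and its exhaustiveness is self-evident, at the cost of rederiving the acyclicity contradictions (the $5$- and $6$-cycles, the shared-neighbor arguments) several times over; the paper's route is shorter but depends on spotting the support-disjointness reduction. One small point to patch when you write yours up: in the $(\phi,\chi,\psi)$ pattern the identity $\phi(u)+\phi(w)=\chi(v)+\chi(w)$ does not by itself force $u=w$; it also admits the solution $v=u$ with $N(v)=\{w\}$ and $N(w)=\{v\}$, i.e.\ an isolated edge, which must be excluded by the connectedness of $T$ together with $|V(T)|\geq 4$.
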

\begin{proof}
Suppose $\tau=\{\gamma(x),\delta(y),\epsilon(z)\}$, where $\gamma,\delta, \epsilon \in\{\phi,\chi,\psi\}$ and $x,y,z \in V(T)$. If two of $x,y,z$ are equal, then two elements of $\tau$ come from the corresponding vertex triple. It follows that the third element of $\tau$ is parallel to the third element of the vertex triple, contradicting the hypothesis that $\tau$ cannot be changed into a vertex triple by parallel swaps. Hence $x \neq y \neq z \neq x$. 

Every element of $\tau$ corresponds to a column of $IAS(T)$. As $\tau$ is a circuit in $M[IAS(T)]$, each nonzero entry in one of the three columns is matched by a nonzero entry in precisely one of the other two columns. 

Suppose every two of these three columns share a nonzero entry; then each column has at least two nonzero entries, as the three columns do not have a common nonzero entry, so $\gamma, \delta, \epsilon \in \{\chi, \psi\}$. As $T$ is a tree, $x,y,z$ cannot all be neighbors. Suppose that two pairs of $x,y,z$ are neighbors, and the third pair is not; say $x$ and $y$ are not neighbors. As $T$ is a tree, the neighbors $x$ and $z$ cannot share a neighbor; and the same for $y$ and $z$. If $z$ has a neighbor other than $x$ or $y$, we contradict the fact that each nonzero entry of $\epsilon(z)$ must be matched in $\gamma(x)$ or $\delta(y)$; hence the degree of $z$ is $2$. If $x$ has a neighbor $v$ other than $z$, the nonzero entry of $\gamma(x)$ in the $v$ row must be matched in $\delta(y)$, as it is not matched in $\epsilon(z)$; but then $x,v,y,z,x$ is a closed walk in $T$, an impossibility. The same contradiction arises if $y$ has a neighbor other than $z$, so $V(T)=\{x,y,z\}$, contradicting the hypothesis that $|V(T)| \geq 4$. Suppose only one pair of $x,y,z$ are neighbors; say $x$ is adjacent to $y$, and neither neighbors $z$. Then each of $x,y$ shares a neighbor with $z$; but this is impossible in a tree. We conclude that none of $x,y,z$ are adjacent, and each pair has a shared neighbor. The shared neighbors must be different, as the three columns do not have a common nonzero entry; again, this is impossible in a tree.

Renaming $x,y,z$ if necessary, we may assume that $\gamma(x)$ and $\delta(y)$ do not share a nonzero entry. Then $\epsilon(z)$ has a nonzero entry in every row where $\gamma(x)$ or $\delta(y)$ has a nonzero entry, and nowhere else.

If $\gamma(x)$ has more than two nonzero entries, then $x$ and $z$ share two neighbors, an impossibility in a tree. The same holds for $\delta(y)$, so each of $\gamma(x),\delta(y)$ has one or two nonzero entries.

Suppose $\gamma(x)$ has two nonzero entries. If $\gamma=\chi$ then either $x$ and $z$ share two neighbors, or $x$ and $z$ are neighbors that share a neighbor. Neither situation is possible in a tree, so $\gamma=\psi$. As $\psi(x)$ has only two nonzero entries, $x$ is a leaf. As $\epsilon(z)$ shares the $1$ entry of $\psi(x)$ in the $x$ row, $x$ and $z$ are neighbors. Hence the entry of $\psi(x)$ in the $z$ row is $1$; $\epsilon(z)$ shares this entry, so $\epsilon = \psi$. If $\delta(y)$ were to have two nonzero entries, then as $\gamma(x)$ and $\delta(y)$ do not share a nonzero entry, neither nonzero entry of $\delta(y)$ would be in the $z$ row. Both nonzero entries of $\delta(y)$ would be matched in $\epsilon(z)=\psi(z)$, so $y$ and $z$ would share two neighbors, an impossibility. Hence $\delta(y)$ has only one nonzero entry, so the $\delta(y)$ column of $IAS(T)$ is equal to the $\phi(y')$ column for some vertex $y'$. As the nonzero entry of $\phi(y')$ is matched in $\epsilon(z)=\psi(z)$, $y'$ is a neighbor of $z$. A parallel swap changes $\tau$ into $\{\psi(x),\phi(y'),\psi(z)\}$ with $x$ a leaf, $z$ the only neighbor of $x$, and $y'$ the only other neighbor of $z$.

If $\delta(y)$ has two nonzero entries, interchange the names of $x$ and $y$ and apply the argument of the preceding paragraph.

The only remaining possibility is that each of $\gamma(x),\delta(y)$ has only one nonzero entry. Using parallel swaps, we can change $\tau$ into $\tau'=\{\phi(x'),\phi(y'), \epsilon(z)\}$. If $\epsilon(z)=\psi(z)$ then one of $x',y'$ is $z$, so $\tau'$ contains two elements of the vertex triple of $z$; a parallel swap then changes $\tau'$ into the vertex triple of $z$, contrary to hypothesis. It follows that $\epsilon(z)=\chi(z)$, and hence $x'$ and $y'$ are the only neighbors of $z$. \end{proof}

As mentioned in the introduction, the isotropic matroids of general graphs detect only equivalence under local complementation, not equivalence under isomorphism. A structural reflection of the special properties of isotropic matroids of forests is the fact that triangulations not ps-equivalent to vertex triangulations are not unusual for general graphs, as discussed in \cite{Tnewnew}, but they are quite rare for forests.

\begin{theorem}
\label{bigthm}
Let $T$ be a tree whose isotropic matroid has a triangulation that is not ps-equivalent to the vertex triangulation. Then $T$ is isomorphic to the four-vertex path, $P_4$.
\end{theorem}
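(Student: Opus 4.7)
Let $\mathfrak{T}$ denote the hypothesized triangulation that is not ps-equivalent to the vertex triangulation. The strategy is to locate in $\mathfrak{T}$ a triple $\tau$ in one of the normal forms supplied by Lemma \ref{degtwolem}, and then to track where the two remaining elements $\phi(z)$ and $\psi(z)$ of the vertex triple of $\tau$'s central vertex $z$ must lie in the rest of $\mathfrak{T}$. Each placement imposes a structural condition on $T$ near $z$, and together they force $T=P_4$.

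The first substep is to show that $\mathfrak{T}$ contains some triple $\tau$ that is not ps-equivalent to any vertex triple. Since parallel swaps act only within parallel classes, the ps-orbit of the vertex triangulation consists precisely of those triangulations obtained by independently permuting the elements of each parallel class of $M[IAS(T)]$ among the vertex-triple slots they originally occupy. Using Proposition \ref{props}, one verifies that when $|V(T)|\ge 3$ no vertex has a nontrivial $\psi$-parallel class (this would require the vertex to lie in a two-vertex component), so no two distinct vertex triples become ps-equivalent; consequently every triangulation in which every triple is individually ps-equivalent to a vertex triple is in the ps-orbit of the vertex triangulation, and the cases $|V(T)|\le 2$ can be checked by hand. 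So some $\tau\in\mathfrak{T}$ is not ps-equivalent to any vertex triple. A short case analysis using Proposition \ref{props} rules out the possibility that $\tau$ is a union of two circuits: in a tree with $|V(T)|\ge 2$ such a triple would consist of three pairwise parallel elements $\chi(v_1),\chi(v_2),\phi(w)$, but then $\chi(w)$ (or one of the $\phi(v_i)$) has its vertex triple as the only $3$-circuit it can live in, producing an immediate conflict. Hence $\tau$ is a genuine $3$-circuit, Lemma \ref{degtwolem} applies, and further parallel swaps applied to $\mathfrak{T}$ bring $\tau$ into the normal form $\{\phi(x),\phi(y),\chi(z)\}$ with $z$ of degree $2$ and neighbors $x,y$; the other form of Lemma \ref{degtwolem} is handled symmetrically.

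The second substep is to locate the triples of $\mathfrak{T}$ that contain $\psi(z)$ and $\phi(z)$. A short column-sum computation in the adjacency matrix of $T$, using only acyclicity, shows that the $3$-circuits of $M[IAS(T)]$ containing $\psi(z)$ are the vertex triple of $z$, $\{\psi(x),\phi(y),\psi(z)\}$ (existing only when $x$ is a leaf), and $\{\phi(x),\psi(y),\psi(z)\}$ (existing only when $y$ is a leaf), while every non-vertex-triple $3$-circuit containing $\phi(z)$ has the form $\{\phi(u),\chi(v),\phi(z)\}$ or $\{\chi(u),\chi(v),\phi(z)\}$ and forces one of $z$'s neighbors to have degree $2$. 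Since $\psi(z)$ is not parallel to anything (as $z$ has degree $2$ in a tree with more than two vertices), the triple of $\mathfrak{T}$ containing $\psi(z)$ must be one of these $3$-circuits, possibly after a parallel swap of the element that conflicts with $\tau$. Chasing these constraints together with the analogous analysis for $\phi(z)$ forces, after possibly interchanging the roles of $x$ and $y$, that $x$ is a leaf, that $y$ has degree $2$, and that the other neighbor $y'$ of $y$ is a leaf.

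Hence $T$ contains the induced path $x-z-y-y'$ with degree sequence $1,2,2,1$; every edge incident to these four vertices is accounted for, so no further vertex of $T$ can be attached, and by connectedness $V(T)=\{x,z,y,y'\}$ and $T=P_4$. The main obstacle is the first substep, translating the global hypothesis that $\mathfrak{T}$ is not ps-equivalent to the vertex triangulation into the local statement that a specific triple of $\mathfrak{T}$ fails to be ps-equivalent to any vertex triple; this hinges on the parallel-class bookkeeping rather than a triple-by-triple argument, and requires particular attention in cases where a vertex has several leaf neighbors (which enlarges its $\phi$-parallel class). The column-sum enumeration in the second substep is the main computational load but is a short, uniform case analysis resting on the acyclicity of $T$.
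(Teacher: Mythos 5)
Your overall architecture matches the paper's (reduce to the normal forms of Lemma \ref{degtwolem}, then track the triples containing the remaining elements of $z$'s vertex triple), and your first substep is a legitimate alternative to the paper's device of choosing $\mathfrak{T}$ with the minimal number of non-vertex triples: since $\psi$ elements have trivial parallel classes in a connected tree on at least three vertices, a triangulation whose every triple is individually ps-equivalent to a vertex triple is the image of the vertex triangulation under a parallel-class-preserving permutation, hence ps-equivalent to it. The genuine gap is in the second substep. Your enumeration of the non-vertex-triple $3$-circuits through $\phi(z)$ is wrong: you omit the form $\{\psi(x),\psi(w),\phi(z)\}$, which exists whenever $x$ has degree $2$ with neighbors $z$ and a leaf $w$ (the $x$ and $w$ rows cancel in the sum of the two $\psi$ columns). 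This is not a marginal omission: in $P_4=v_1v_2v_3v_4$ the set $\{\psi(v_1),\psi(v_2),\phi(v_3)\}$ is exactly such a circuit, and this configuration is one of the two main branches of the analysis, requiring its own chain of deductions (about the triples containing $\chi(x)$ and $\phi(w)$) before one can conclude that $y$ is a leaf. More broadly, the sentence ``chasing these constraints \dots forces \dots $T=P_4$'' is where essentially all of the remaining work lives: for each admissible placement of $\phi(z)$ one must determine the triples containing $\psi(z)$, $\psi(x)$, $\chi(x)$, and so on, and show that every vertex of $T$ is thereby exhausted. As written the conclusion is asserted rather than derived, and because of the missing circuit type the chase would not even begin from a correct list of configurations.

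Two smaller problems. Your dismissal of the union-of-two-circuits possibility does not work as stated: such a triple need not contain $\phi(w)$ (it could be three $\chi$ elements of leaves sharing the neighbor $w$), and the claim that $\chi(w)$ or some $\phi(v_i)$ ``has its vertex triple as the only $3$-circuit it can live in'' is false in general, since $\{\phi(v_1),\chi(w),\phi(v_2)\}$ is a non-vertex $3$-circuit when $w$ has degree $2$. The case can be eliminated, but by a different route: for $|V(T)|\ge 4$ a vertex $w$ with two leaf neighbors has degree at least $3$, each leaf $v_i$ adjacent to $w$ then forces a triple $\{\phi(v_i),\psi(v_i),x_i\}$ with $x_i$ in the parallel class of $\phi(w)$, and these $x_i$ together with the three elements of $\tau$ exceed the size of that class. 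Finally, Lemma \ref{degtwolem} requires $|V(T)|\ge 4$, so $P_3$ needs a separate treatment; your sketch only checks $|V(T)|\le 2$ by hand.
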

\begin{proof}
It is easy to see that if $T$ is a tree on one or two vertices, $M[IAS(T)]$ does not have a triangulation that is not ps-equivalent to the vertex triangulation.

Up to isomorphism, there is only one tree on three vertices, the path $P_3$. If there were a triangulation of $M[IAS(P_3)]$ not ps-equivalent to the vertex triangulation, it would include a triple that meets all three vertex triples. Therefore, the triangulation would not include any vertex triple; every triple would meet all three vertex triples. If $v$ is one of the leaves of $P_3$ and $w$ is the central vertex, then one triple of the triangulation would include $\phi(v)$ and one of $\chi(w),\psi(w)$, because these are the only elements available to cancel the nonzero entry of $\phi(v)$ in the $v$ row. For the same reason, a triple would include $\psi(v)$ and one of $\chi(w),\psi(w)$. It follows that the triple containing the remaining element of the vertex triple of $v$, $\chi(v)$, would also contain the remaining element of the vertex triple of $w$, $\phi(w)$. But this is impossible: the matrix $IAS(P_3)$ does not have a column of zeroes, so there is no way to complete this triple. Hence $M[IAS(P_3)]$ does not have a triangulation that is not ps-equivalent to the vertex triangulation.

Suppose $T$ has four or more vertices and we are given a triangulation $\mathfrak{T}$ of $M[IAS(T)]$, which is not ps-equivalent to the vertex triangulation, and has the smallest possible number of triples that are not vertex triples. Let $\tau$ be a non-vertex triple included in $\mathfrak{T}$.

Suppose $\tau$ can be changed into a vertex triple using parallel swaps. According to Proposition \ref{props}, no $\psi$ element of $M[IAS(T)]$ is parallel to any other element of $M[IAS(T)]$. Hence there is a $v \in V(T)$ with $\psi(v) \in \tau$, and parallel swaps can be used to change $\tau$ into the vertex triple of $v$. The element(s) to be swapped into $\tau$, $\phi(v)$ and/or $\chi(v)$, must appear in one or two other triples of $\mathfrak{T}$. Neither of these other triples could be a vertex triple, as $\psi(v)$ appears in $\tau$. Therefore the parallel swap(s) that change $\tau$ into the vertex triple of $v$ also produce a triangulation ps-equivalent to $\mathfrak{T}$, which has a smaller number of non-vertex triples. This contradicts our choice of $\mathfrak{T}$.

According to Lemma \ref{degtwolem}, we may perform parallel swaps to change $\tau$ into $\tau' = \{\phi(x), \phi(y), \chi(z)\}$ or $\widetilde \tau=\{\psi(x),\phi(y),\psi(z)\}$, where $x,y,z$ are three distinct vertices of $T$, and $x$ and $y$ are the only neighbors of $z$. These parallel swaps transform $\mathfrak{T}$ into a triangulation $\mathfrak{T}'$ or $\widetilde {\mathfrak{T}}$. 

If the second of these possibilities occurs, then Lemma \ref{degtwolem} tells us that $x$ is a leaf. The triple $\widetilde \tau' \in \widetilde {\mathfrak{T}}$ that contains $\phi(x)$ must also contain $\chi(z)$, as these are the only two columns of $IAS(T)$ that are not included in $\widetilde \tau$ and have nonzero entries in the $x$ row. In order to complete a dependent set, the third element of $\widetilde \tau'$ must have only one nonzero entry, in the $y$ row. Therefore a parallel swap changes $\widetilde \tau'$ into $\{\phi(x),\phi(y),\chi(z)\}$. 

We conclude that no matter which of the two possibilities mentioned in the paragraph before last occurs, parallel swaps can be used to change $\mathfrak{T}$ into a triangulation $\mathfrak{T}'$ that includes $\tau'=\{\phi(x),\phi(y),\chi(z)\}$, where $x$ and $y$ are the only neighbors of $z$. The rest of the argument is focused on $\tau'$ and $\mathfrak{T}'$; $\tau$ and $\widetilde \tau$ will not be mentioned again.

Let $\tau''$ be the triple of $\mathfrak{T}'$ that includes $\phi(z)$. One of the other elements of $\tau''$ must correspond to a column of $IAS(T)$ with a nonzero entry in the $z$ row. As $x$ and $y$ are the only neighbors of $z$, this other element must be one of the following: $\chi(x),\psi(x),\chi(y),\psi(y),\psi(z)$. If $\psi(z) \in \tau''$ then the third element of $\tau''$ is parallel to $\chi(z)$; it cannot equal $\chi(z)$ because $\chi(z) \in \tau'$. This parallel of $\chi(z)$ must correspond to a column of $IAS(T)$ with the same two nonzero entries as $\chi(z)$. But this is impossible, as the corresponding vertex of $T$ would either neighbor both $x$ and $y$, or equal one of $x,y$ and neighbor the other; either way, the fact that $x$ and $y$ both neighbor $z$ would imply that $T$ contains a circuit. Therefore $\psi(z) \notin \tau''$. Interchanging the names of $x$ and $y$ if necessary, we may presume that $\tau''$ contains $\chi(x)$ or $\psi(x)$.

Case 1. Suppose $\tau''$ contains $\chi(x)$ along with $\phi(z)$. Then $\tau''=\{\chi(x),\phi(z),\kappa \}$, where $\kappa$ corresponds to a column of $IAS(T)$ with nonzero entries in all the rows corresponding to neighbors of $x$ other than $z$, and nowhere else. If $\kappa$ has more than one nonzero entry, the vertex that contributes this column to $IAS(T)$ is either a neighbor of $x$ that shares a neighbor with $x$, or a vertex that shares two neighbors with $x$. Either way, this vertex lies on a closed walk with $x$, an impossibility as $T$ is a tree. Therefore $\kappa$ has only one nonzero entry. We conclude that $x$ is of degree two; one neighbor is $z$ and the other, $w$ say, has $\phi(w)$ parallel to $\kappa$. In sum: $T$ contains a path $w,x,z,y$; $x$ and $z$ are of degree $2$ in $T$; and $\mathfrak{T}'$ includes $\tau' = \{\phi(x), \phi(y), \chi(z)\}$ and $\tau''=\{\phi(z),\chi(x),\kappa\}$, where $\kappa$ is parallel to $\phi(w)$.

The triangulation $\mathfrak{T}'$ must also include triple(s) that contain $\psi(x)$ and $\psi(z)$. If $\psi(x)$ and $\psi(z)$ were to appear together in a triple, the third element of that triple would have nonzero entries in the $w$ and $y$ rows; but then $w$ and $y$ would share a neighbor, or neighbor each other, and either way the path $w,x,z,y$ would be contained in a closed walk in $T$. This is impossible, so $\mathfrak{T}'$ includes a triple $\tau'''$ that contains $\psi(x)$ but not $\psi(z)$. Considering that the elements of $\tau'$ and $\tau''$ are not available for $\tau'''$, the only way for $\tau'''$ to match the entry of $\psi(x)$ in the $x$ row is for $\tau'''$ to contain $\chi(w)$ or $\psi(w)$. Similarly, to match the entry of $\psi(x)$ in the $z$ row, $\tau'''$ must contain $\chi(y)$ or $\psi(y)$. These three elements -- $\chi(w)$ or $\psi(w)$, $\psi(x)$, and $\chi(y)$ or $\psi(y)$ -- will not provide a dependent triple $\tau'''$ if $w$ has any neighbor other than $x$, or $y$ has any neighbor other than $z$. We deduce that $w,x,y$ and $z$ are the only vertices in $T$, and $T$ is isomorphic to $P_4$.

Case 2. Suppose $\tau''$ contains $\psi(x)$ along with $\phi(z)$. Then $\tau''=\{\psi(x),\phi(z),\kappa \}$ where $\kappa$ corresponds to a column of $IAS(T)$ with nonzero entries in the $x$ row, in all the rows corresponding to neighbors of $x$ other than $z$, and nowhere else. If $\kappa$ has more than two nonzero entries, the vertex that contributes the $\kappa$ column to $IAS(T)$ is a neighbor of $x$ that shares a neighbor with $x$. This is impossible, as $T$ is a tree. If $\kappa$ is a $\chi$ column with two nonzero entries, the same contradiction arises. 

Suppose $\kappa$ has only one nonzero entry. This entry must be in the $x$ row, so the only neighbor of $x$ is $z$. As $\phi(x) \in \tau'$ and $\kappa \in \tau''$, $\kappa \neq \phi(x)$. Hence $\kappa=\chi(w)$ for some neighbor $w$ of $x$. But this is impossible; the only neighbor of $x$ is $z$, and the entry of $\chi(z)$ in the $y$ row is $1$, not $0$.

The only remaining possibility is that there is a vertex $w$ such that $\kappa=\psi(w)$ has two nonzero entries, in the $x$ and $w$ rows. Then $w \neq x $, because $x$ neighbors $z$ and the $z$ coordinate of $\kappa=\psi(w)$ is $0$, and $w \notin \{y,z\}$, because both $\psi(y)$ and $\psi(z)$ have entries equal to $1$ in the $y$ and $z$ rows. It follows that $w$ is a leaf, and $x$ is its unique neighbor. Moreover, $\psi(x)$ has nonzero entries only in the $w,x$ and $z$ rows, so $w$ and $z$ are the only neighbors of $x$.

We conclude that $T$ contains a path $w,x,z,y$, with $w$ a leaf and $x,z$ of degree two. Two triples of $\mathfrak{T}'$ are $\tau'=\{\phi(x),\phi(y),\chi(z)\}$ and $\tau''=\{\psi(x),\phi(z),\psi(w)\}$. Some other triple $\tau''' \in \mathfrak{T}'$ must contain $\chi(x)$, and $\tau'''$ must contain a second element with a nonzero entry in the $w$ row; $\tau''$ contains $\psi(x)$ and $\psi(w)$, so this second element of $\tau'''$ can only be $\phi(w)$. The third element of $\tau'''$ must correspond to a column of $IAS(T)$ whose only nonzero entry is in the $z$ row, but it cannot be $\phi(z)$, because $\phi(z) \in \tau''$. Therefore $z$ neighbors a leaf of $T$. As $x$ is not a leaf, and $y$ is the only other neighbor of $z$, it follows that $y$ is a leaf, and hence that $T$ is isomorphic to $P_4$. \end{proof}

\begin{theorem}
\label{bigthm2}
If $T$ is a tree, then every triangulation of $M[IAS(T)]$ is equivalent to the vertex triangulation.
\end{theorem}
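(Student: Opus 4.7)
The plan is to reduce immediately to the single nontrivial case $T \cong P_4$ using Theorem \ref{bigthm}, and then finish by pointing at the explicit matroid automorphism already exhibited in Subsection \ref{strange}.

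Let $\mathfrak{T}$ be a triangulation of $M[IAS(T)]$. If $\mathfrak{T}$ is ps-equivalent to the vertex triangulation, then it is certainly equivalent to it (Definition \ref{swapdef} produces a composition of parallel swaps, each of which is an automorphism). Otherwise Theorem \ref{bigthm} forces $T \cong P_4$, and I may assume $V(T)=\{v_1,v_2,v_3,v_4\}$ with the vertices indexed in order along the path.

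The proof of Theorem \ref{bigthm} shows that, after applying a sequence of parallel swaps, $\mathfrak{T}$ contains a triple $\{\phi(x),\phi(y),\chi(z)\}$ where $z$ is a degree-$2$ vertex with neighbors $x,y$, and that the three remaining triples are essentially forced by Case~1 or Case~2 of that argument. I would then carry out two small bookkeeping reductions. First, the Case~1 and Case~2 triangulations coincide after the single parallel swap that interchanges the parallel pair $\phi(v_3)$ and $\chi(v_4)$ (and the analogous swap at the other end of the path). Second, the choice of which degree-$2$ vertex plays the role of $z$ is absorbed by the matroid automorphism induced by the path-reversing isomorphism of $P_4$. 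These two reductions cut the problem down to a single canonical triangulation $\mathfrak{T}_0$ of $M[IAS(P_4)]$.

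The finishing step is to verify by direct inspection that the matroid automorphism $f$ of $M[IAS(P_4)]$ exhibited in Subsection \ref{strange} sends the four vertex triples of $P_4$ precisely to the four triples of $\mathfrak{T}_0$; once checked, equivalence of $\mathfrak{T}$ and the vertex triangulation follows by composing the initial sequence of parallel swaps, the symmetries of $P_4$, and $f^{-1}$. The main obstacle is the bookkeeping in the previous paragraph: the proof of Theorem \ref{bigthm} leaves several apparently distinct triangulations, and one has to confirm explicitly, by tracking the forced fourth triple in each case and invoking the parallels $\chi(v_1)=\phi(v_2)$ and $\chi(v_4)=\phi(v_3)$, that all the choices really do collapse to a single ps-equivalence class, so that only one image has to be matched against $f$ applied to the vertex triangulation.
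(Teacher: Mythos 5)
Your overall strategy --- reduce to $P_4$ via Theorem \ref{bigthm} and then exhibit the exceptional triangulation as the image of the vertex triangulation under the automorphism $f$ of Section \ref{strange} --- is sound, and the final identification you defer is in fact true: $f$ carries the four vertex triples onto $\{\phi(v_2),\phi(v_4),\chi(v_3)\}$, $\{\chi(v_1),\psi(v_3),\psi(v_4)\}$, $\{\psi(v_1),\psi(v_2),\chi(v_4)\}$ and $\{\phi(v_1),\chi(v_2),\phi(v_3)\}$. The gap is in the step where you assert that ``the proof of Theorem \ref{bigthm} shows \dots\ that the three remaining triples are essentially forced by Case~1 or Case~2 of that argument.'' It does not. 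That proof stops the moment it can conclude $|V(T)|=4$: in its Case~1 the triple $\tau'''$ containing $\psi(x)$ is only constrained to contain ``$\chi(w)$ or $\psi(w)$'' and ``$\chi(y)$ or $\psi(y)$,'' and the fourth triple of the triangulation is never examined at all (one must still verify that the three leftover elements form a dependent set). So the enumeration of all triangulations of $M[IAS(P_4)]$ up to ps-equivalence --- which is the real content of the theorem once $T\cong P_4$ is known --- cannot be cited from Theorem \ref{bigthm}; it has to be carried out. A smaller point: the structure in the proof of Theorem \ref{bigthm} is derived for a triangulation chosen to minimize the number of non-vertex triples, so to apply it to your given $\mathfrak{T}$ you should first pass to a ps-equivalent minimal representative (harmless, since ps-equivalence is a special case of equivalence, but it needs saying).

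For comparison, the paper does not recycle the internal case analysis of Theorem \ref{bigthm} at all. It starts afresh with the triple $\tau_1\ni\phi(v_1)$ of an arbitrary triangulation of $M[IAS(P_4)]$, splits on whether the second element of $\tau_1$ is $\psi(v_1)$, $\chi(v_2)$ or $\psi(v_2)$, and in the one nontrivial case applies $f$ to the single triple $\{\phi(v_1),\chi(v_2),\chi(v_4)\}$, turning it into the vertex triple of $v_3$, after which the row-by-row forcing completes the triangulation. That avoids ever writing down the full exceptional triangulation. Your route is workable, but once the deferred bookkeeping is done honestly it is at least as long as the paper's direct argument.
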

\begin{proof}
According to Theorem \ref{bigthm}, we need only consider the special case $T=P_4$. Let $V(T)=\{v_1,v_2,v_3,v_4\}$, with the vertices indexed in order along the path, and let $\mathfrak{T}$ be a triangulation of $M[IAS(T)]$.

Let $\tau_1 \in \mathfrak{T}$ be the vertex triple that contains $\phi(v_1)$. Then $\tau_1$ contains one other element corresponding to a column of $IAS(T)$ with a nonzero entry in the $v_1$ row. This other element must be $\psi(v_1),\chi(v_2)$ or $\psi(v_2)$.

Case 1: $\psi(v_1)\in \tau_1$. As $\tau_1$ is dependent, its third element is parallel to $\chi(v_1)$. Therefore $\mathfrak{T}$ is ps-equivalent to a triangulation $\mathfrak{T}'$, which includes the vertex triple of $v_1$. The triple $\tau_2 \in \mathfrak{T}'$ that includes $\chi(v_2)$ must also include $\psi(v_2)$, as these are the only remaining columns of $IAS(T)$ with nonzero entries in the $v_1$ row. The third element of $\tau_2$ is parallel to $\phi(v_2)$; it cannot be $\chi(v_1)$, as $\mathfrak{T}'$ includes the vertex triple of $v_1$, so it must equal $\phi(v_2)$. Thus $\mathfrak{T}'$ includes the vertex triples of both $v_1$ and $v_2$. The two remaining columns of $IAS(T)$ with nonzero entries in the $v_2$ row are $\chi(v_3)$ and $\psi(v_3)$; they must appear in the same triple of $\mathfrak{T}'$, and the third element of this triple must be parallel to $\phi(v_3)$. A parallel swap (if needed) then transforms $\mathfrak{T}'$ into a triangulation that contains the vertex triples of $v_1,v_2$ and $v_3$; this must be the vertex triangulation.

Case 2: $\chi(v_2)\in \tau_1$. The third element of $\tau_1$ must be parallel to $\chi(v_4)$, so $\mathfrak{T}$ is ps-equivalent to a triangulation $\mathfrak{T}'$ with $\tau'_1=\{\phi(v_1),\chi(v_2),\chi(v_4)\} \in \mathfrak{T}'$. Let $f$ be the automorphism of $M[IAS(T)]$ discussed in Section \ref{strange}. Then $f(\mathfrak{T}')$ includes $f(\tau'_1)$, which is the vertex triple of $v_3$. The two remaining columns of $IAS(T)$ with nonzero entries in the $v_4$ row are $\phi(v_4)$ and $\psi(v_4)$; they must appear together in a triple of $f(\mathfrak{T}')$, and this triple can only be the vertex triple of $v_4$. The two remaining columns of $IAS(T)$ with nonzero entries in the $v_3$ row are $\chi(v_2)$ and $\psi(v_2)$; they must be included in a single triple of $f(\mathfrak{T}')$. After a parallel swap (if needed) this triple is the vertex triple of $v_2$, so $f(\mathfrak{T}')$ is ps-equivalent to a triangulation that includes the vertex triples of $v_2,v_3$ and $v_4$. This can only be the vertex triangulation. 

Case 3: $\psi(v_2)\in \tau_1$. The third element of $\tau_1$ then corresponds to a column of $IAS(T)$ with precisely two nonzero entries, one in the $v_2$ row and one in the $v_3$ row. No such column exists.
\end{proof}
We should mention that Theorem \ref{bigthm2} extends to arbitrary graphs. See \cite[Prop.\ 29]{Tnewnew} for details.

The next lemma is useful for induction.
\begin{lemma}
\label{treeminor}
Let $v$ be a vertex of a forest $F$, and let $F \setminus v$ be the forest obtained from $F$ by removing $v$. Then
\[
(M[IAS(F)] \setminus \{\chi(v),\psi(v)\}) / \phi(v) \cong M[IAS(F \setminus  v)].
\]
The isomorphism is natural, in that the image of each element $\phi(x),\chi(x)$ or $\psi(x)$ from $(M[IAS(F)]-\{\chi(v),\psi(v)\}) / \phi(v)$ is the element $\phi(x),\chi(x)$ or $\psi(x)$ from $M[IAS(F \setminus  v)]$.
\end{lemma}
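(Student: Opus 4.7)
The plan is to use Proposition \ref{contr} together with a direct column-by-column comparison of matrices. The key observation is that the $\phi(v)$ column of $IAS(F)$, by definition, has its unique nonzero entry in the $v$ row, so Proposition \ref{contr} applies immediately once the deletions have been performed.

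First, I would delete the columns $\chi(v)$ and $\psi(v)$ from $IAS(F)$, producing a matrix $B$ with columns $\phi_F(v)$ and $\phi_F(x), \chi_F(x), \psi_F(x)$ for each $x \in V(F) \setminus \{v\}$. By the discussion of submatroids in Section \ref{secbin}, $M[B] = M[IAS(F)] \setminus \{\chi(v),\psi(v)\}$. Next, since the $\phi(v)$ column of $B$ has its only nonzero entry in the $v$ row, Proposition \ref{contr} states that $M[B]/\phi(v) \cong M[B']$, where $B'$ is obtained from $B$ by deleting the $v$ row and the $\phi(v)$ column.

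The main task is then to identify $B'$ with $IAS(F\setminus v)$, up to column ordering, under the natural correspondence $\phi_F(x) \mapsto \phi_{F\setminus v}(x)$, $\chi_F(x) \mapsto \chi_{F\setminus v}(x)$, $\psi_F(x) \mapsto \psi_{F\setminus v}(x)$ for each $x \in V(F)\setminus\{v\}$. For each such $x$, I would check: the restriction of $\phi_F(x)$ to rows other than $v$ is a standard basis column with a single $1$ in the $x$ row, hence equals $\phi_{F\setminus v}(x)$; the restriction of $\chi_F(x)$ to rows other than $v$ has $1$'s exactly in the rows corresponding to neighbors of $x$ in $F$ other than $v$, which are precisely the neighbors of $x$ in $F\setminus v$, hence equals $\chi_{F\setminus v}(x)$; and analogously for $\psi_F(x)$. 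Therefore $B'$ and $IAS(F\setminus v)$ have the same columns, and the indicated bijection is a matroid isomorphism.

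There is no real obstacle beyond careful bookkeeping. The only point to mention is that contraction and deletion on disjoint elements commute, so applying the deletion of $\{\chi(v),\psi(v)\}$ first and then the contraction of $\phi(v)$ gives the same matroid as the expression on the left-hand side of the stated isomorphism; this allows the representation-theoretic argument above to proceed in a single linear pass.
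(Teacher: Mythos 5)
Your proposal is correct and follows essentially the same route as the paper: delete the $\chi(v)$ and $\psi(v)$ columns, note that the $\phi(v)$ column has its only nonzero entry in the $v$ row, apply Proposition \ref{contr}, and identify the resulting matrix with $IAS(F\setminus v)$. The paper compresses the column-by-column verification into the word ``obvious,'' and your closing remark about commuting deletion and contraction is unnecessary (the left-hand side already prescribes deletion first), but nothing is missing.
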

\begin{proof}
It is obvious that the matrix $IAS(F \setminus v)$ is the same as the submatrix of $IAS(F)$ obtained by deleting the $v$ row and the $\phi(v),\chi(v)$ and $\psi(v)$ columns. As the only nonzero entry of the $\phi(v)$ column occurs in the $v$ row, the lemma follows from Proposition \ref{contr}.
\end{proof}

One last lemma will be useful in the proof of the implication $3 \implies 1$ from Theorem \ref{main}. Suppose $F$ and $F'$ are forests, and $f$ is an isomorphism between $M[IAS(F)]$ and $M[IAS(F')]$. If $w$ is a vertex of $F$ that is adjacent to a leaf $v$, then $\phi(w)$ and $\chi(v)$ are parallel in $M[IAS(F)]$, so their images under $f$ must be parallel in $M[IAS(F')]$. It follows from Proposition \ref{props} that either $f(\phi(w))= \phi(w')$ for some $w' \in V(F')$ that is adjacent to a leaf, or $f(\phi(w))= \chi(v')$ for some leaf $v' \in V(F')$.

\begin{definition}
In this situation the \emph{leaf index} $i(f)$ is the number of vertices $w \in V(F)$ such that $w$ is adjacent to a leaf, and $f(\phi(w))= \chi(v')$ for some leaf $v' \in V(F')$.
\end{definition}
\begin{lemma}
\label{indexlem}
Suppose $f:M[IAS(F)] \to M[IAS(F')]$ is an isomorphism, which maps each vertex triple of $F$ to a vertex triple of $F'$. Then there is an isomorphism $f':M[IAS(F)] \to M[IAS(F')]$ such that (a) $f'$ maps each vertex triple of $F$ to a vertex triple of $F'$; (b) $i(f')=0$; and (c) whenever $x \in V(F)$ and $x' \in V(F')$ have $f(\phi(x))=\phi(x')$, it is also true that $f'(\phi(x))=\phi(x')$.
\end{lemma}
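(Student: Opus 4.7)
My plan is to induct on the leaf index $i(f)$. The base case $i(f) = 0$ is immediate: take $f' = f$. For the inductive step $i(f) \geq 1$, pick some $w \in V(F)$ adjacent to a leaf with $f(\phi(w)) = \chi(v')$ for a leaf $v' \in V(F')$, let $w'$ be the unique neighbor of $v'$ in $F'$, and set $g = \beta_{v'w'} \circ f$, where $\beta_{v'w'}$ is the automorphism from Lemma \ref{pendlem}. Because $\beta_{v'w'}$ interchanges the vertex triples of $v'$ and $w'$ and fixes all other matroid elements, $g$ again maps vertex triples of $F$ to vertex triples of $F'$, so condition (a) is inherited. The key claims are that $i(g) < i(f)$ and that every alignment $f(\phi(x)) = \phi(x')$ persists as $g(\phi(x)) = \phi(x')$. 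Once these are established, applying the inductive hypothesis to $g$ produces the desired $f'$, and condition (c) for $f'$ relative to $f$ follows by transitivity.

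To analyze $g$, I first identify a ``partner'' leaf $u$. By items 4 and 5 of Proposition \ref{props}, the parallel class of $\phi(w)$ in $M[IAS(F)]$ is $\{\phi(w)\} \cup \{\chi(u_i) \mid u_i \text{ is a leaf adjacent to } w\}$, and the parallel class of $\chi(v')$ in $M[IAS(F')]$ is $\{\phi(w')\} \cup \{\chi(v'_j) \mid v'_j \text{ is a leaf adjacent to } w'\}$. Since $f$ bijects these two parallel classes and $f(\phi(w)) = \chi(v')$, there is a unique leaf $u$ adjacent to $w$ with $f(\chi(u)) = \phi(w')$. Consequently $f$ sends the vertex triple of $u$ to that of $w'$, and $f(\phi(u)) \in \{\chi(w'), \psi(w')\}$.

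The automorphism $\beta_{v'w'}$ fixes every matroid element outside the vertex triples of $v'$ and $w'$, and the $f$-preimages of those triples are precisely the vertex triples of $w$ and $u$. Thus $g$ and $f$ agree on $\phi(y)$ for every $y \in V(F) \setminus \{w, u\}$, so the contribution of such $y$ to the leaf index is unchanged. Direct computation gives $g(\phi(w)) = \beta_{v'w'}(\chi(v')) = \phi(w')$ and $g(\phi(u)) \in \{\beta_{v'w'}(\chi(w')), \beta_{v'w'}(\psi(w'))\} = \{\psi(v'), \phi(v')\}$; none of these is $\chi$ of any vertex, so neither $w$ nor $u$ contributes to $i(g)$, and the strict decrease $i(g) < i(f)$ follows. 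For (c), if $f(\phi(x)) = \phi(x')$ then $f$ maps the vertex triple of $x$ to the vertex triple of $x'$. Were $x' = v'$, uniqueness of preimages of vertex triples would force $x = w$, but $f(\phi(w)) = \chi(v') \neq \phi(v')$, a contradiction; were $x' = w'$, the same argument forces $x = u$, but $f(\phi(u)) \neq \phi(w')$. Hence $x' \notin \{v', w'\}$, so $\beta_{v'w'}$ fixes $\phi(x')$ and $g(\phi(x)) = \phi(x')$.

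The main subtlety is the case where $w$ is itself a leaf, so that $\{w, u\}$ is a two-vertex connected component of $F$ and $u$ is also adjacent to a leaf. One might worry that the step simultaneously cures $w$ but creates a new bad vertex at $u$. The uniform identity $g(\phi(u)) \in \{\psi(v'), \phi(v')\}$ shows, however, that the $\phi$-image of $u$ under $g$ is never $\chi$ of any vertex, so $u$ cannot appear in $i(g)$ regardless of whether it appeared in $i(f)$. Thus the induction is strictly monotone in $i(f)$ and terminates, delivering the required $f'$.
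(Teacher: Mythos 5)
Your proposal is correct and follows essentially the same route as the paper: induct (or iterate) on the leaf index, identify the partner leaf via the structure of parallel classes, and compose with the $\beta$ automorphism of Lemma \ref{pendlem} to repair one bad vertex at a time while preserving conditions (a) and (c). The only cosmetic difference is that you apply $\beta_{v'w'}$ on the target matroid $M[IAS(F')]$ where the paper composes with $\beta_{vw}$ on the source matroid $M[IAS(F)]$; your explicit treatment of the two-vertex-component case is a nice touch but does not change the argument.
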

\begin{proof}
If $i(f)=0$, then $f'=f$ satisfies the lemma.

Suppose $i(f)>0$. Then there is a vertex $w \in V(F)$ such that $w$ neighbors a leaf of $F$, and $f(\phi(w))=\chi(v')$ for some leaf $v'$ of $F'$. Let $w'$ be the unique neighbor of  $v'$ in $F'$. As $f$ is an isomorphism, and $\chi(v')$ is parallel to $\phi(w')$, $f^{-1}(\phi(w'))$ is a parallel of $\phi(w)$. That is, there is a leaf $v$ that neighbors $w$ and has $f(\chi(v))=\phi(w')$. According to Lemma \ref{pendlem}, 
\[
\beta_{vw} = (\phi(v) \psi(w))(\chi(v) \phi(w))(\psi(v) \chi(w)) 
\]
is an automorphism of $M[IAS(F)]$, so the composition $f' = f \circ \beta_{vw}:M[IAS(F)] \to M[IAS(F')]$ is an isomorphism.

The automorphism $\beta_{vw}$ is the identity map outside the vertex triples of $v$ and $w$, and it maps either of these two vertex triples to the other. As $f$ maps vertex triples of $F$ to vertex triples of $F'$, it follows that $f'$ also maps vertex triples of $F$ to vertex triples of $F'$. As $f'$ agrees with $f$ outside the vertex triples of $v$ and $w$, and $f'(\phi(w)) = f(\beta_{vw}(\phi(w))) =f(\chi(v))= \phi(w')$, $i(f')=i(f)-1.$ Notice that $f(\phi(v))$ cannot be a $\phi$ element of $F'$, as it is an element of the same vertex triple as $f(\chi(v))=\phi(w')$. Therefore $f'$ has the property that whenever $x \in V(F)$ and $x' \in V(F')$ have $f(\phi(x))=\phi(x')$, it is also true that $f'(\phi(x))=\phi(x')$. 

Repeating this argument $i(f)$ times, we will find an $f'$ that satisfies the lemma. \end{proof}

We are now ready to verify the implication $3 \implies 1$ of Theorem \ref{main}. The proof uses induction on $n = |V(F)|$. As mentioned at the end of Section \ref{secexam}, we do not prove $3 \implies 1$ as stated in Theorem \ref{main}, because that statement does not give us a sufficiently precise inductive hypothesis. Instead we prove a more detailed theorem: 

\begin{theorem}
\label{closethm}
Suppose $F$ and $F'$ are forests, and $M[IAS(F)] \cong M[IAS(F')]$. Then:
\begin{enumerate}
    \item There is an isomorphism $f:M[IAS(F)] \to M[IAS(F')]$, under which the image of the vertex triangulation of $M[IAS(F)]$ is the vertex triangulation of $M[IAS(F')]$.
    \item For any  $f$ that satisfies part 1, there is a graph isomorphism $g:F \to F'$ with $g(x)=x'$ whenever $x \in V(F)$ and $x' \in V(F')$ have $f(\phi(x))=\phi(x')$.
\end{enumerate}
\end{theorem}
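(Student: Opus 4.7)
For part~1, I would first extend Theorem~\ref{bigthm2} to forests: since $M[IAS(F')]$ decomposes as a matroid direct sum over the connected components of $F'$ (with isolated vertices contributing a loop and a parallel pair), every triangulation respects this decomposition, and Theorem~\ref{bigthm2} applied componentwise shows that every triangulation of $M[IAS(F')]$ is equivalent to its vertex triangulation. Hence for any isomorphism $h : M[IAS(F)] \to M[IAS(F')]$, the image of the vertex triangulation of $F$ under $h$ is equivalent, via some automorphism $\alpha$ of $M[IAS(F')]$, to the vertex triangulation of $F'$. Setting $f = \alpha \circ h$ proves part~1.

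For part~2, I would proceed by induction on $n = |V(F)|$. Given $f$ satisfying part~1, define the natural bijection $g : V(F) \to V(F')$ by letting $g(x)$ be the unique $x' \in V(F')$ whose vertex triple equals the $f$-image of the vertex triple of $x$. The $\phi$-identification property then comes for free: if $f(\phi(x)) = \phi(x')$, then $\phi(x')$ lies in the image of the vertex triple of $x$, so that image must be the vertex triple of $x'$, giving $g(x) = x'$. What remains is to show $g$ preserves adjacency. First I would apply Lemma~\ref{indexlem} to replace $f$ by an isomorphism $f_0$ with $i(f_0)=0$, retaining all $\phi$-identifications of $f$. Isolated vertices of $F$ correspond under $f_0$ to isolated vertices of $F'$ (since the loop $\chi(v)$ must map to a loop), so they can be stripped from both sides and handled trivially; hence I may assume $F$ has a leaf $v$ with neighbor $w$. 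Since $i(f_0)=0$, $f_0(\phi(w))=\phi(w')$ for some $w'\in V(F')$, and since $\chi(v)$ is parallel to $\phi(w)$, Proposition~\ref{props} forces $f_0(\chi(v))=\chi(v')$ for some leaf $v'$ adjacent to $w'$. Thus $g(v)=v'$ and $g(w)=w'$, so the edge $vw$ is matched correctly with $v'w'$.

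The main obstacle is that $f_0(\phi(v))$ might be $\psi(v')$ rather than $\phi(v')$, which would prevent the direct application of Lemma~\ref{treeminor} on the $F'$ side. I would handle this by composing with the automorphism $\gamma_{v'w'} = (\phi(v')\psi(v'))(\chi(w')\psi(w'))$ from Lemma~\ref{pendlem2} to form $f_1 = \gamma_{v'w'}\circ f_0$. A short check confirms that $f_1$ still maps vertex triples to vertex triples, still has leaf index zero, and still preserves every $\phi$-identification of $f$: the only identification $\gamma_{v'w'}$ could disrupt would be at $v'$, but the unique preimage of $\phi(v')$ under $f_0$ is $\psi(v)$, which is not a $\phi$-element. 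Now $f_1(\phi(v))=\phi(v')$, $f_1(\chi(v))=\chi(v')$, $f_1(\psi(v))=\psi(v')$, and $f_1(\phi(w))=\phi(w')$, so Lemma~\ref{treeminor} applied on both sides induces an isomorphism $\bar f_1 : M[IAS(F\setminus v)] \to M[IAS(F'\setminus v')]$ that again maps vertex triples to vertex triples and inherits the relevant $\phi$-identifications. The inductive hypothesis then produces a graph isomorphism $\bar g : F\setminus v \to F'\setminus v'$ with $\bar g(x)=x'$ whenever $\bar f_1(\phi(x))=\phi(x')$; extending by $g(v)=v'$ yields the required graph isomorphism $F\to F'$.
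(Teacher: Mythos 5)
Your argument for part 2 is essentially the paper's own proof: reduce to leaf index zero via Lemma \ref{indexlem}, correct the possible $\phi(v)\mapsto\psi(v')$ mismatch with $\gamma_{v'w'}$ from Lemma \ref{pendlem2}, delete the leaf on both sides via Lemma \ref{treeminor}, and induct. That part is sound.

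There is, however, a genuine gap in your part 1. You assert that because $M[IAS(F')]$ decomposes as a direct sum over the connected components of $F'$, ``every triangulation respects this decomposition,'' and you then apply Theorem \ref{bigthm2} componentwise. But a triple of a triangulation need not be a circuit; it may be the union of two circuits, and the only way a $3$-element set is a union of two circuits is as a loop together with a parallel pair (or as three mutually parallel elements). A loop and a parallel pair can lie in \emph{different} components: if $v$ is an isolated vertex and $u$ is a leaf of some other connected component with neighbor $w$, then $\{\chi(v),\chi(u),\phi(w)\}$ is a legitimate triple of a triangulation that straddles two components (indeed, even the vertex triple of an isolated vertex already spans two matroid components, by Proposition \ref{props}). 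So the componentwise reduction does not go through as stated whenever $F'$ has isolated vertices, and the claimed extension of Theorem \ref{bigthm2} to forests needs a separate argument. The paper avoids this entirely: it never extends Theorem \ref{bigthm2} beyond trees, but instead handles disconnected forests inside the induction on $|V(F)|$, peeling off one matroid component (equivalently one connected component of the forest) at a time and invoking Theorem \ref{bigthm2} only in the connected case. Your part 1 is repairable --- matroid isomorphisms send the $2$-element components $\{\phi(v),\psi(v)\}$ of isolated vertices to one another, so the image of the \emph{vertex} triangulation only mixes components among the isolated vertices, where the defect is fixed by an automorphism permuting those small components --- but as written the step ``every triangulation respects the decomposition'' is false and must be replaced. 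A smaller stylistic point: the bijection $g$ you define at the outset from the $f$-images of vertex triples is not the isomorphism you ultimately construct (composing with $\beta_{vw}$ permutes vertex triples), so you should not present the inductive construction as merely verifying that this initial $g$ preserves adjacency; the theorem only requires agreement with $f$ on vertices where $f(\phi(x))$ is a $\phi$ element, which your final $g$ does satisfy.
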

\begin{proof} Let $F$ and $F'$ be forests, with an isomorphism $M[IAS(F)] \cong M[IAS(F')]$. Then the matroids have the same cardinality, so $F$ and $F'$ have the same number $n$ of vertices. 

If $n =1$, there are two matroid isomorphisms $f,f':M[IAS(F)] \to M[IAS(F')]$. One of $f,f'$ matches elements according to their $\phi,\chi,\psi$ designations, and the other of $f,f'$ matches the $\phi$ element of one matroid to the $\psi$ element of the other. (The $\chi$ elements must be matched to each other by any isomorphism, as they are the only loops in the matroids.) Either of the isomorphisms $f,f'$ satisfies the statement, together with the unique graph isomorphism $g:F \to F'$.

The argument proceeds using induction on $n>1$. Suppose the matroids $M[IAS(F)]$ and $M[IAS(F')]$ have more than one component. If $M[IAS(F)]$ has a component $M$ with more than two elements, then $M$ corresponds to a connected component $C$ of $F$ with $1<|V(C)|<n$. The image of $M$ under an isomorphism $M[IAS(F)] \cong M[IAS(F')]$ is a component $M'$  of $M[IAS(F')]$, which corresponds to a connected component $C'$ of $F'$. The theorem holds for $F$ and $F'$ because the inductive hypothesis applies separately to $C$ and $C'$, on the one hand, and $F \setminus C$ and $F' \setminus C'$, on the other hand. If $M[IAS(F)]$ has no component with more than two elements, then all the vertices of $F$ and $F'$ are isolated. It is easy to see that the theorem holds in this trivial case.

Suppose now that $M[IAS(F)]$ and $M[IAS(F')]$ are isomorphic matroids with only one component. Then $F$ and $F'$ are both connected. The image of the vertex triangulation of $M[IAS(F)]$ under an isomorphism $M[IAS(F)] \cong M[IAS(F')]$ is a triangulation of $M[IAS(F')]$. According to Theorem \ref{bigthm2}, this triangulation is equivalent to the vertex triangulation of $M[IAS(F')]$. Therefore, we can compose an automorphism of $M[IAS(F')]$ with the original isomorphism $M[IAS(F)] \cong M[IAS(F')]$ to obtain an isomorphism $f:M[IAS(F)] \to M[IAS(F')]$ that satisfies part 1 of the statement.

Now, let $f:M[IAS(F)] \to M[IAS(F')]$ be any isomorphism that satisfies part 1 of the statement. We need to verify that part 2 of the statement is satisfied by some isomorphism $g:F \to F'$. According to Lemma \ref{indexlem}, there is an isomorphism $f':M[IAS(F)] \to M[IAS(F')]$ such that $i(f')=0$ and any isomorphism $g:F \to F'$ that satisfies part 2 for $f'$ will also satisfy part 2 for $f$. 

Let $v$ be a leaf of $F$, with unique neighbor $w$. As $i(f')=0$, $f'(\phi(w))=\phi(w')$ for some $w'\in V(F')$ that neighbors a leaf. Then $\chi(v)$ is parallel to $\phi(w)$, so $f'(\chi(v))$ is parallel to $f'(\phi(w))=\phi(w')$; hence $f'(\chi(v)) = \chi(v')$ for some leaf $v'$ adjacent to $w'$. As $f'$ maps vertex triples to vertex triples, $f'(\phi(v))$ is either $\phi(v')$ or $\psi(v')$.

If $f'(\phi(v))=\phi(v')$, let $f''=f'$. If $f'(\phi(v))=\psi(v')$, let $f''=\gamma_{v'w'} \circ f'$, where $\gamma_{v'w'}=(\phi(v') \psi(v'))(\chi(w') \psi(w'))$ is the automorphism of $M[IAS(F')]$ mentioned in Lemma \ref{pendlem2}. Then $f'':M[IAS(F)] \to M[IAS(F')]$ is an isomorphism that maps vertex triples to vertex triples and has these properties: $f''(\phi(x))=\phi(x')$ whenever $f(\phi(x))=\phi(x')$, $i(f'')=0$, $f''(\phi(v))=\phi(v')$, $f''(\chi(v))=\chi(v')$, and $f''(\phi(w))=\phi(w')$. The first of these properties implies that any isomorphism $g:F \to F'$ that satisfies part 2 of the statement for $f''$ will also satisfy part 2 for $f$.

As $f''(\phi(v))=\phi(v')$, $f''(\chi(v))=\chi(v')$, and $f''$ maps vertex triples to vertex triples, it must be that $f''(\psi(v))=\psi(v')$. Therefore $f''$ defines an isomorphism
\[
f''': (M[IAS(F)] \setminus \{\chi(v),\psi(v)\})/\phi(v) \to (M[IAS(F')] \setminus \{\chi(v'),\psi(v')\})/\phi(v').
\]
According to Lemma \ref{treeminor}, it follows that $f'''$ defines an isomorphism $f'''':M[IAS(F \setminus v)] \to M[IAS(F' \setminus v')]$. The isomorphism $f''''$ inherits the following properties from $f''$: $f''''$ maps vertex triples to vertex triples; $f''''(\phi(x))=\phi(x')$ whenever $f(\phi(x))=\phi(x')$; $i(f'''')=0$; and $f''''(\phi(w))=\phi(w')$. Applying the inductive hypothesis to $f''''$, we deduce that there is a graph isomorphism $g_0:F \setminus v \to F' \setminus v'$ with $g_0(x)=x$ whenever $f''''(\phi(x))=\phi(x')$. In particular, $g_0(w)=w'$. The inductive step is completed by noting that $g_0$ can be extended to an isomorphism $g:F \to F'$ by defining $g(v)=v'$. This extended isomorphism satisfies part 2 of the statement for $f''$, and hence also for $f$. \end{proof}

\end{document}